\newtheorem{theorem}{Theorem}[section]
\newtheorem{lemma}[theorem]{Lemma}
\newtheorem{claim}[theorem]{Claim}
\newtheorem{corollary}[theorem]{Corollary}
\theoremstyle{definition}
\newtheorem{definition}[theorem]{Definition}
\newtheorem{example}[theorem]{Example}
\theoremstyle{remark}
\newtheorem{remark}[theorem]{Remark}
\numberwithin{equation}{section}
\newcommand{\cH}{\mathcal{H}}
\newcommand{\cS}{\mathcal{S}}
\newcommand{\cU}{\mathcal{U}}
\newcommand{\cN}{\mathcal{N}}
\newcommand{\cP}{\mathcal{P}}
\newcommand{\bP}{\mathbb{P}}
\newcommand{\bE}{\mathbb{E}}
\newcommand{\R}{\mathbb{R}}
\renewcommand{\S}{\mathcal{S}}
\newcommand{\ep}{\epsilon }
\newcommand{\de}{\delta }
\newcommand{\si}{\sigma }
\newcommand{\ga}{\gamma }
\newcommand{\Ga}{\Gamma }
\newcommand{\be}{\beta }
\newcommand{\one}{\mathbf{1}}
\newcommand{\supp}{\operatorname{supp}}
\newcommand{\cReff}{\eff\mathcal{R}}
\newcommand{\eff}{\textrm{eff}}
\newcommand{\UST}{\operatorname{UST}}
\newcommand{\detp}{\det{}'}
\newcommand{\KL}{\text{KL}}
\newcommand{\rv}[1]{\underline{#1}}
\newcommand{\bi}{\begin{itemize}}
\newcommand{\ei}{\end{itemize}}
\newcommand{\MEO}{\operatorname{MEO}}
\pgfplotsset{compat=1.18}
\begin{document}
\large

\title{Minimizing the determinant of the graph Laplacian}

\author[Albin]{Nathan Albin$^1$}
\address{$^1$Department of Mathematics, Kansas State University, Manhattan, KS 66506}
\email{albin@ksu.edu}
\author[Lind]{Joan Lind$^2$}
\address{$^2$Department of Mathematics, University of Tennessee, Knoxville, TN 37996, USA}
\email{jlind@utk.edu}
\author[Melikyan]{Anna Melikyan}
\email{meliqyananna@gmail.com}
\author[Poggi-Corradini]{Pietro Poggi-Corradini$^1$}
\email{pietro@ksu.edu}

\thanks{The first and last authors acknowledge support from NSF grant  n.~1515810}

\begin{abstract}
In this paper, we study extremal values for the determinant of the weighted graph Laplacian under simple nondegeneracy conditions on the weights.  We derive necessary and sufficient conditions for the determinant of the Laplacian to be bounded away from zero and for the existence of a minimizing set of weights.  These conditions are given both in terms of properties of random spanning trees and in terms of a type of density on graphs. These results generalize and extend the work of \cite{melikyan2019}.
\end{abstract}

\date{}

\keywords{fair trees, uniform spanning trees, entropy, log determinant of Laplacian}

\maketitle

\section{Introduction}\label{sec:introduction}

\subsection{Determinant of the Laplacian matrix}

Consider a connected, undirected graph $G=(V,E)$, with vertex set $V$ and edge set $E$ that are both finite. Multi-edges are allowed, but not self-loops. To each edge $e\in E$, we assign a positive weight $\sigma(e)>0$.  We shall tend to treat these weights as entries in a vector $\sigma\in\mathbb{R}^E_{>0}$, the set of positive vectors indexed by the edge set.  Since $G$ is assumed connected, the weighted Laplacian matrix $L_\sigma$ has $|V|-1$ positive eigenvalues.  If we order the eigenvalues so that
\begin{equation*}
0 = \lambda_1 < \lambda_2 \le \lambda_3 \le\cdots \le \lambda_{|V|},
\end{equation*}
then the determinant of the Laplacian is defined as
\begin{equation*}
\detp L_\sigma := \prod_{i=2}^{|V|}\lambda_i = \det\left(L_\sigma + \frac{1}{|V|}\one\one^T\right).
\end{equation*}
Kirchhoff's matrix tree theorem provides a way to compute this determinant using weighted spanning trees. Let $\Ga$ be the set of all spanning trees of $G$. For $\gamma\in\Gamma$, we define the $\sigma$-weight of $\gamma$ as
\begin{equation*}
\sigma[\gamma] = \prod_{e\in\gamma}\sigma(e) = \prod_{e\in E}\sigma(e)^{\mathcal{N}(\gamma,e)},
\end{equation*}
where $\mathcal{N}$ is the $\Gamma\times E$ \emph{edge usage matrix} defined as
\begin{equation}\label{eq:usage-matrix}
\mathcal{N}(\gamma,e) = 
\begin{cases}
1 & \text{if }e\in\gamma,\\
0 & \text{if }e\notin\gamma.
\end{cases}
\end{equation}
(Since we will sometimes use additive weights on edges, we use the square braces in the notation $\sigma[\gamma]$ to help call attention to the fact that $\sigma$ is used as a \emph{multiplicative} weight in this definition.)
\begin{theorem}[Kirchhoff's Matrix Tree Theorem]\label{thm:matrix-tree}
For any positive set of edge weights $\sigma\in\mathbb{R}^E_{>0}$,
\begin{equation*}
\detp L_\sigma = |V|\sum_{\gamma\in\Gamma}\sigma[\gamma].
\end{equation*}
\end{theorem}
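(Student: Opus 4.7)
The plan is to follow the classical proof via the signed incidence matrix and the Cauchy--Binet formula. Fix an arbitrary orientation on each edge of $G$ and form the signed vertex-edge incidence matrix $D\in\mathbb{R}^{V\times E}$ by $D_{v,e}=+1$ if $v$ is the head of $e$, $D_{v,e}=-1$ if $v$ is the tail of $e$, and $D_{v,e}=0$ otherwise. Setting $W=\Diag(\sigma(e))_{e\in E}$, a direct computation gives the factorization $L_\sigma = D W D^T$, which is independent of the choice of orientation. Since $L_\sigma$ is symmetric positive semidefinite with $\ker L_\sigma = \Span(\one)$ (using that $G$ is connected), a standard argument --- for example, the matrix determinant lemma applied to $L_\sigma + \alpha\one\one^T$, compared with its spectral expansion --- yields
\[
\detp L_\sigma \;=\; |V|\cdot\det L_\sigma^{(v)},
\]
where $L_\sigma^{(v)}$ is the reduced Laplacian obtained by deleting the row and column corresponding to any chosen vertex $v\in V$.

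Applying Cauchy--Binet to the induced factorization $L_\sigma^{(v)} = D^{(v)} W (D^{(v)})^T$, where $D^{(v)}$ denotes $D$ with row $v$ removed, gives
\[
\det L_\sigma^{(v)} \;=\; \sum_{\substack{S\subset E\\ |S|=|V|-1}} \sigma[S] \,\bigl(\det D^{(v)}_S\bigr)^2,
\]
where $D^{(v)}_S$ is the $(|V|-1)\times(|V|-1)$ submatrix of $D^{(v)}$ with columns indexed by $S$ and $\sigma[S]=\prod_{e\in S}\sigma(e)$ is precisely the multiplicative weight appearing in the theorem.

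The heart of the proof --- and essentially the only step that genuinely uses graph structure --- is the identity $\det D^{(v)}_S = \pm 1$ when $(V,S)$ is a spanning tree of $G$ and $\det D^{(v)}_S = 0$ otherwise. I would establish this by induction on $|V|$. If the subgraph $(V,S)$ contains a cycle, signing the columns of $D^{(v)}_S$ according to a consistent traversal of the cycle produces a linear dependence, forcing the determinant to vanish; and any edge set $S$ of size $|V|-1$ that is not a spanning tree must contain a cycle. If $(V,S)$ is a spanning tree, choose a leaf $w\ne v$ (which exists since a tree on $|V|\ge 2$ vertices has at least two leaves) and let $e\in S$ be the unique edge of $S$ incident to $w$. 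The row of $D^{(v)}_S$ corresponding to $w$ has a single nonzero entry, namely $\pm 1$ in the column of $e$; cofactor expansion along this row reduces $\det D^{(v)}_S$ to $\pm 1$ times the reduced incidence determinant for the spanning tree $S\setminus\{e\}$ of the smaller graph $G-w$, which equals $\pm 1$ by the inductive hypothesis. Plugging this identity back collapses the Cauchy--Binet sum to a sum over $\gamma\in\Gamma$, yielding $\det L_\sigma^{(v)} = \sum_{\gamma\in\Gamma}\sigma[\gamma]$, and multiplication by $|V|$ completes the proof.
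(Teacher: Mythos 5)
Your argument is the standard classical proof (incidence-matrix factorization $L_\sigma = DWD^T$, Cauchy--Binet, and the leaf-deletion induction showing $\det D^{(v)}_S=\pm1$ exactly when $S$ is a spanning tree), and it is correct as written; note that the paper itself states Kirchhoff's theorem without proof, as a classical fact, so there is no in-paper argument to compare against. The only step worth tightening is the reduction $\detp L_\sigma = |V|\det L_\sigma^{(v)}$: the cleanest route is to observe that $L_\sigma\,\mathrm{adj}(L_\sigma)=\det(L_\sigma)I=0$ together with $\Ker L_\sigma=\Span(\one)$ forces $\mathrm{adj}(L_\sigma)=c\,\one\one^T$ for a scalar $c$, and comparing traces (or using the spectral expansion of the adjugate) gives $c=\frac{1}{|V|}\prod_{i\ge 2}\lambda_i$, so that every cofactor, in particular $\det L_\sigma^{(v)}$, equals $\detp L_\sigma/|V|$. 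Your parenthetical appeal to the matrix determinant lemma gets to the same place but implicitly uses this "all cofactors are equal" fact, so it is worth making explicit. Your handling of multi-edges is consistent with the paper's conventions, since a pair of parallel edges forms a $2$-cycle and the column-dependence argument still applies.
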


Since the determinant is positively homogeneous of degree $|V|-1$,  one way to gain a deeper understanding of $\detp L_\sigma$, is to consider its extreme values subject to some normalizing assumption on $\sigma$. In particular, we consider the following optimization problem, which for purpose of this paper, we call the {\it Minimum Determinant Problem}.
\begin{equation}\label{eq:min-det-orig}
  \begin{split}
  \underset{\sigma>0}{\text{minimize}}\quad&\sum_{\gamma\in\Gamma}\sigma[\gamma]\\
  \text{subject to}\quad&\prod_{e\in E}\sigma(e) = 1.
  \end{split}
\end{equation}

If $G$ is not a tree, then the determinant can be made arbitrarily large with weights satisfying this constraint.  To see this, let $e$ be any edge and define $\sigma$ to be $M>0$ on $e$ and $M^{-1/(|E|-1)}$ on all remaining edges, in order to satisfy the product constraint.  If $\gamma$ is any spanning tree containing $e$, then
\begin{equation*}
\sigma[\gamma] = M\cdot M^{-\frac{|V|-2}{|E|-1}} = M^{\frac{|E|-(|V|-1)}{|E|-1}}.
\end{equation*}
Assuming that $G$ is not a tree, the exponent on $M$ is positive and, therefore, $\sigma[\gamma]$ can be made arbitrarily large by sending $M$ to $+\infty$. Hence, we will focus on the minimum determinant problem.

Our main result in this paper is that, as the examples below suggest, the optimization problem in~\eqref{eq:min-det-orig} divides the set of  all graphs into three categories,
\begin{itemize}
  \item graphs, like the paw, for which the determinant can be made arbitrarily close to zero,
  \item graphs, like the diamond, for which the determinant is bounded away from zero and the minimum value is attained by some (unique) choice of weights, and
  \item graphs, like the house, for which the determinant is bounded away from zero, but no minimizing choices of weights exists.
\end{itemize}
Furthermore, as we will show in Theorem \ref{thm:mincore}, graphs like the paw always have a subgraph, we call {\it minimal core}, that behaves like the diamond graph, and moreover, the minimal core can be shrunk and a new minimal core can be found for the shrunk graph in an iterative manner. We call this a {\it deflation process}. This result will be used in \cite{alpc}, where we explore the scaling limit of fair trees on planar square grids.

\subsubsection{Examples}\label{ssec:examples}
The following three examples will be useful in demonstrating the results of this paper.

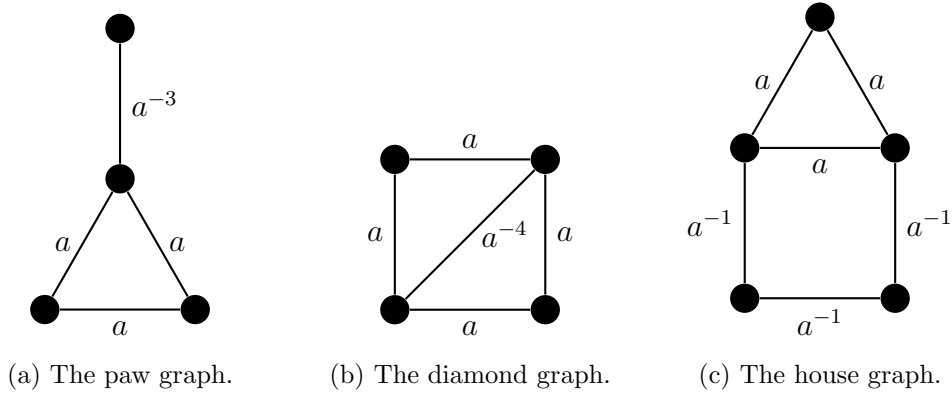
\begin{figure}
\centering
\begin{subfigure}[b]{0.333\textwidth}
\centering
\begin{tikzpicture}[scale=2]
\tikzstyle{node}=[fill=black,circle]
\tikzstyle{edge}=[draw=black,thick]
\node[node] (a) at (0,0) {};
\node[node] (b) at (0,-1) {};
\node[node] (c) at (-0.5,-1.87) {};
\node[node] (d) at (0.5,-1.87) {};

\draw[edge] (a)--(b) node [midway, right] {$a^{-3}$};
\draw[edge] (b)--(c) node [midway, left] {$a$};
\draw[edge] (b)--(d) node [midway, right] {$a$};
\draw[edge] (c)--(d) node [midway, below] {$a$};
\end{tikzpicture}
\caption{The paw graph.}\label{fig:paw}
\end{subfigure}%
\begin{subfigure}[b]{0.333\textwidth}
\centering
\begin{tikzpicture}[scale=2]
\tikzstyle{node}=[fill=black,circle]
\tikzstyle{edge}=[draw=black,thick]
\node[node] (a) at (0,0) {};
\node[node] (b) at (1,0) {};
\node[node] (c) at (1,1) {};
\node[node] (d) at (0,1) {};

\draw[edge] (a)--(b) node [midway, below] {$a$};
\draw[edge] (b)--(c) node [midway, right] {$a$};
\draw[edge] (c)--(d) node [midway, above] {$a$};
\draw[edge] (d)--(a) node [midway, left] {$a$};
\draw[edge] (a)--(c) node [midway, right] {$a^{-4}$};
\end{tikzpicture}
\caption{The diamond graph.}\label{fig:diamond}
\end{subfigure}%
\begin{subfigure}[b]{0.333\textwidth}
\centering
\begin{tikzpicture}[scale=2]
\tikzstyle{node}=[fill=black,circle]
\tikzstyle{edge}=[draw=black,thick]
\node[node] (b) at (0,-1) {};
\node[node] (c) at (-0.5,-1.87) {};
\node[node] (d) at (0.5,-1.87) {};
\node[node] (e) at (-0.5,-2.87) {};
\node[node] (f) at (0.5,-2.87) {};

\draw[edge] (b)--(c) node [midway, left] {$a$};
\draw[edge] (b)--(d) node [midway, right] {$a$};
\draw[edge] (c)--(d) node [midway, below] {$a$};
\draw[edge] (c)--(e) node [midway, left] {$a^{-1}$};
\draw[edge] (d)--(f) node [midway, right] {$a^{-1}$};
\draw[edge] (e)--(f) node [midway, below] {$a^{-1}$};
\end{tikzpicture}
\caption{The house graph.}\label{fig:house}
\end{subfigure}
\caption{Graphs for Examples~\ref{ex:det-paw}--~\ref{ex:det-house}.}
\end{figure}

\begin{example}\label{ex:det-paw}
  Let $G$ be the paw graph in Figure~\ref{fig:paw}. Then the infimum in~\eqref{eq:min-det-orig} is zero (and no minimizing choice of $\sigma$ exists).
  To see this, consider the edge weights $\sigma$ as given by the labels in the figure.  There are three spanning trees, each using the pendant edge and two of the remaining three edges, so
\begin{equation*}
\sum_{\gamma\in\Gamma}\sigma[\gamma] = 3a^{-1}.
\end{equation*}
By making $a$ arbitrarily large, we can make $\detp L_\sigma$ arbitrarily close to $0$. Since $\detp L_\sigma>0$ for any choice of positive weights, no minimizing choice of $\sigma$ exists.
\end{example}

\begin{example}\label{ex:det-C3}
Consider the diamond graph, as in Figure~\ref{fig:diamond}, with positive edge weights $\si$ that are functions of $a>0$, as depicted. Note that there are $8$ total spanning trees, $4$ that contain the diagonal and $4$ that do not. So,
\begin{equation}\label{eq:diamond-detprod}
\sum_{\gamma\in\Gamma}\sigma[\gamma] = 4a^3 + 4a^{-2}.
\end{equation}
The quantity on the right is minimized when $a\equiv\left(\frac{2}{3}\right)^{\frac{1}{5}}$. We claim that this choice of weights $\bar{\si}$ is optimal for problem (\ref{eq:min-det-orig}). To see this, we first compute the induced  (weighted uniform) pmf $\mu_{\bar{\si}}$ on spanning trees, see Section \ref{ssec:wust}, where $\mu_{\bar{\si}}(\ga)$ is proportional to $\bar{\sigma}[\gamma]$. The normalizing factor (\ref{eq:diamond-detprod}) becomes
\begin{equation}\label{eq:diamong-detprod-computed}
\sum_{\gamma\in\Gamma}\bar{\sigma}[\gamma]=4\left(\frac{2}{3}\right)^{\frac{3}{5}}+4\left(\frac{2}{3}\right)^{-\frac{2}{5}}=4\left(\frac{2}{3}\right)^{\frac{3}{5}}\left(1+\frac{3}{2}\right)=10\left(\frac{2}{3}\right)^{\frac{3}{5}}.
\end{equation}
So, if $\ga\in\Ga$ does not include the diagonal, then
\[
\mu_{\bar{\si}}(\ga)=\frac{a^3}{4a^3+4a^{-2}}=\frac{\left(\frac{2}{3}\right)^{\frac{3}{5}}}{10\left(\frac{2}{3}\right)^{\frac{3}{5}}}=\frac{1}{10}.
\]
While, if $\ga$ does contain the diagonal edge, then
\[
\mu_{\bar{\si}}(\ga)=\frac{a^{-2}}{4a^3+4a^{-2}}=\frac{3}{20}.
\]
Next, we compute the edge probabilities of $\mu_{\bar{\si}}$ which, for $e\in E$, we denote as $\eta_{\bar{\si}}(e):=\bP_{\mu_{\bar{\si}}}(e\in\underline{\ga})$. If $e$ is not the diagonal, then
\[
\eta_{\bar{\si}}(e)=3\cdot\frac{1}{10}+2\cdot\frac{3}{20}=\frac{3}{5}.
\]
While, if $e_0$ is the diagonal edge, then
\[
\eta_{\bar{\si}}(e_0)= 4\cdot\frac{3}{20}=\frac{3}{5}.
\]
Note that $\frac{3}{5}$ is equal to $\frac{|V|-1}{|E|},$ and this is actually necessary whenever the edge-probabilities are constant, because they always add up to $|V|-1$, see (\ref{eq:edge-probabilities-sum}).
As a result, according to Definition \ref{def:homogeneous}, the diamond graph $G$ is strictly homogeneous (with respect to constant $\beta$).
Consequently, Theorem \ref{thm:main} implies that problem (\ref{eq:min-det-orig}) for $G$ has a unique optimal set of weights $\si^*$.

We claim that our choice of weights $\bar{\si}$ is the optimal set of weights $\si^*$. To see this, we need to appeal to convex duality. As we will describe in Section \ref{sec:dual-entropy}, if $p^*$ is the minimum determinant, namely, the value of (\ref{eq:min-det-orig}), then by Lemma \ref{lem:entropy-duality}, $p^*=\exp(H^*)$, where $H^*$ is the value of the following maximum entropy problem
\begin{equation}\label{eq:max-entropy-u}
\underset{\mu\in\mathcal{P}(\Gamma)\cap\mathcal{U}}{\text{maximize}}\quad H(\mu),
\end{equation}
where $\cP(\Ga)$ is the set of all pmfs on $\Ga$, see (\ref{eq:set-all-pmfs}), the Shannon entropy of a pmf $\mu$, see \cite{shannon:bell1948}, is
\begin{equation}\label{eq:shannon-entropy}
H(\mu) := -\sum_{\gamma\in\Gamma}\mu(\gamma)\log\mu(\gamma),
\end{equation}
(we interpret $\mu(\gamma)\log \mu(\gamma)=0$ if $\mu(\gamma)=0$), and $\cU$ is the convex cone
\begin{equation}\label{eq:cone-u}
\mathcal{U} := \left\{u\in\mathbb{R}^\Gamma_{\ge 0} : \sum_{\gamma\in\Gamma}\mathcal{N}(\gamma,e) u(\gamma) = \frac{|V|-1}{|E|}u(\Gamma),\ \text{ for all }e\in E \right\}.
\end{equation}
To conclude the example then, we first note that (\ref{eq:diamong-detprod-computed}) gives us the upper bound
\[
p^*\le 10\left(\frac{2}{3}\right)^{\frac{3}{5}}.
\]
On the other hand, since the edge-probabilities of $\mu_{\bar{\si}}$ are constant on $E$, we have $\mu_{\bar{\si}}\in\cU$. Hence, computing the entropy we find that
\[
H(\mu_{\bar{\si}})=-\left(4\cdot\frac{1}{10}\log\frac{1}{10}+4\cdot\frac{3}{20}\log\frac{3}{20}\right)=\log\left(10\left(\frac{2}{3}\right)^{\frac{3}{5}}\right)
\]
So by duality $\exp(H(\mu_{\bar{\si}}))=10\left(\frac{2}{3}\right)^{\frac{3}{5}}$ is also a lower bound for $p^*$. This shows that our choice of weights $\bar{\si}$ was optimal.
\end{example}

\begin{example}\label{ex:det-house}
Consider the house graph with positive edge weights $\si$ assigned as in Figure~\ref{fig:house}. There are $11$ spanning trees of the house graph.
Let $T$ be the triangle where the edge weight is $\si\equiv a$ and let $S$ be the edge-set where $\si\equiv a^{-1}$. A spanning tree for $G$ can be created by removing an edge from $T$ ($3$ choices) and one from $S$ ($3$ choices). This gives rise to $9$ trees that have $\sigma$-weight equal to $1$. The other two trees are obtained by removing two edges from $T$ and none from $S$ and thus have $\sigma$-weight equal to $a^{-2}$.  Therefore,
\begin{equation}\label{eq:house-limit}
\lim_{a\to\infty}\sum_{\gamma\in\Gamma}\sigma[\gamma] = 9.
\end{equation}
It turns out that this is a minimizing sequence for problem (\ref{eq:min-det-orig}) on $G$. However, in this case, there is no optimal set of edge-weights, by Theorem \ref{thm:main}, because
the house graph is homogeneous, but is not strictly-homogeneous. To verify homogeneity, let $\mu_\si$ be the pmf induced by the weights $\si$ and consider the pmf $\mu^*:=\lim_{a\to\infty}\mu_\si$, that is uniform on the $9$ trees that remove an edge from $T$ and one from $S$. Then, $\mu^*$ induces a constant edge usage probability, so $G$ is homogeneous.
On the other hand, no pmf $\mu'$ that is induced from a set of edge-weights can have this property. To see why, note that we would necessarily have $\mu'(\gamma)>0$, for all $\gamma\in\Gamma$. However, nine of the spanning trees intersect $S$ on two edges, while the remaining two intersect $S$ on all three. Therefore, if we average the edge-probabilities $\eta'$ of $\mu'$ over the set $S$, we obtain
\begin{align*}
\frac{1}{|S|}\sum_{e\in S}\eta'(e) & =
  \frac{1}{3}\sum_{e\in S}\sum_{\gamma\in\Gamma}\mathcal{N}(\gamma,e)\mu'(\gamma) \\
&= \frac{1}{3}\sum_{\gamma\in\Gamma}\mu'(\gamma)\sum_{e\in S}\mathcal{N}(\gamma,e) > \frac{2}{3} = \frac{|V|-1}{|E|},
\end{align*}
showing that we cannot have $\eta'(e)\equiv\frac{|V|-1}{|E|}.$ 

Finally, we claim that $p^*=9$, which will imply that $\mu_\si$ is a minimizing sequence for problem (\ref{eq:min-det-orig}). On one hand, by (\ref{eq:house-limit}), we have $p^*\le 9.$ On the other hand, if we compute the entropy
\begin{align*}
H(\mu_\si) & =-\left(9\cdot \frac{1}{9+2a^{-2}}\log\frac{1}{9+2a^{-2}}+2\cdot\frac{a^{-2}}{9+2a^{-2}}\log\frac{a^{-2}}{9+2a^{-2}}\right),
\end{align*}
we see that $\exp(H(\mu_\si))$ converges to $9$ as $a\rightarrow\infty$.

Therefore, we have shown that this construction provides a minimizing sequence of the determinant of the Laplacian, but that no minimizer exists.
\end{example}

\subsubsection{Reweighting}
As will become evident in the sequel, the classification just described is highly imbalanced; very few graphs have the property that a minimizing choice of $\sigma$ exists. Indeed, most graphs are like the paw, in that the determinant can be made arbitrarily close to zero. One can locate a set of edges (like the pendulum in the paw) on which sending $\sigma$ to zero cannot be adequately compensated for by sending $\sigma$ on other edges to $\infty$. In a sense, certain edges are too ``powerful'' in the determinant.

In order to address this power imbalance, it is useful to consider a slightly more general form of normalizing assumption.  In this paper, we study the following parameterized minimum determinant problem
\begin{equation}\label{eq:min-det}
\begin{split}
\underset{\sigma>0}{\text{minimize}}\quad&\sum_{\gamma\in\Gamma}\sigma[\gamma]\\
\text{subject to}\quad&\prod_{e\in E}\sigma(e)^{\beta(e)} = 1,
\end{split}
\end{equation}
where the weights $\beta(e)$ satisfying the conditions
\begin{equation}\label{eq:beta-conditions}
\beta(e) > 0 \quad\text{for all }e\in E\quad\text{and}\quad \beta(E) := \sum_{e\in E}\beta(e) = 1.
\end{equation}

\begin{remark}\label{rem:rescale-beta}
The requirement that the $\beta(E)=1$ is not restrictive. If $\tilde{\beta}$ is a set of positive weights, then replacing $\tilde{\beta}$ by $\beta:=\tilde{\beta}/\tilde{\beta}(E)$ does not affect the constraint equation in~\eqref{eq:min-det}. Normalizing $\beta$ in this way is convenient in that it allows us to omit the term $\beta(E)$ from many formulas that follow.  Moreover, it provides $\beta$ with the interpretation of a probability distribution on the edges.
\end{remark}

To see how reweighting helps, consider the paw graph again, but with $\beta$ equal to $1/3$ on the pendant edge and $2/9$ on the remaining three edges. If $\sigma\equiv a$ on the three triangle edges, then the constraint requires that $\sigma=a^{-2}$ on the pendant edge. Thus,
\begin{equation*}
\sum_{\gamma\in\Gamma} \sigma[\gamma] = 3,
\end{equation*}
regardless the choice of $a$ and this can be shown to be optimal. In fact, Theorem~\ref{thm:good-beta} will show that for any graph there are choices of weights $\beta$ that allow the determinant to be minimized and, therefore, to be bounded away from zero.

\subsubsection{Goal}
The goal of this paper is to explain the behavior of~\eqref{eq:min-det} in terms of the weights $\beta$ and two structural graph properties. The first of these structural properties is related to random spanning trees on the graph; the second is related to a concept of combinatorial density.

\subsection{Random spanning trees and random edges}\label{sec:random-trees-edges}
For any finite set $X$, the set of all probability mass functions (pmfs) on $X$ and the set of all positive pmfs on $X$ are denoted respectively by
\begin{equation}\label{eq:set-all-pmfs}
 \cP(X) := \left\{\mu\in\mathbb{R}^{X}_{\ge 0} : \sum_{x\in X}\mu(x)=1\right\}
 \end{equation}
 and
 \begin{equation*}
 \cP_+(X) := \left\{\mu\in\mathbb{R}^{X}_{> 0} : \sum_{x\in X}\mu(x)=1\right\}.
\end{equation*}
Using this notation,~\eqref{eq:beta-conditions} is simply the requirement that $\beta$ belongs to $\mathcal{P}_+(E)$.

A pmf $\mu\in\cP(X)$ defines a random element $\underline{x}\in X$ that satisfies the law
\[
\bP_\mu\left(\underline{x}=x\right)=\mu(x)\qquad\forall x\in X.
\]
In this paper, we shall focus on two particular choices for $X$, namely $\Gamma$ and $E$.  When $X=\Gamma$, we will call the corresponding random object $\rv{\gamma}\in\Gamma$ a \emph{random spanning tree}, and when $X=E$, we will call the corresponding random object $\rv{e}\in E$ a \emph{random edge}.

There is an interesting connection between random spanning trees and random edges.  First, we observe that each pmf $\mu\in\cP(\Gamma)$ induces a marginal \emph{edge usage probability} on each edge $e\in E$, giving the probability that $e$ is in the random tree $\rv{\gamma}$. This probability can be computed as
\begin{equation*}
\eta_\mu(e) := \mathbb{P}_\mu(e\in\rv{\gamma}) = \sum_{\gamma\in\Gamma}\mathcal{N}(\gamma,e)\mu(\gamma).
\end{equation*}
Then, $\eta_\mu\in\mathbb{R}^E_{\ge 0}$ is called the \emph{edge usage probability vector} associated with $\mu$.

Next, we note that each pmf, $\mu\in\cP(\Gamma)$, on the set of spanning trees can be made to induce a pmf on the set of edges using the following procedure.  To select a random edge $\rv{e}$, first select a random spanning tree $\rv{\gamma}$ according to $\mu$.  This tree will have $|V|-1$ edges.  Choose $\rv{e}$ uniformly at random from among them.  The pmf $\beta\in\cP(E)$ associated with this random edge can be computed as follows.
\begin{equation}\label{eq:beta-eta-verified}
\beta(e) = \frac{1}{|V|-1}\sum_{\gamma\in\Gamma}\mathcal{N}(\gamma,e)\mu(\gamma) = \frac{\eta_\mu(e)}{|V|-1}.
\end{equation}

\subsection{Weighted uniform spanning trees}\label{ssec:wust}
The {\it $\si$-weighted uniform spanning trees} ($\UST_\si$) are a special class of random spanning trees obtained by fixing edge weights $\si\in\R_{>0}^E$. These are random trees $\underline{\ga}\in\Ga$ whose probability density $\mu_\si(\ga)$ is proportional to $\sigma[\gamma]$. It is well known, see \cite[Chapter 4]{LP:book}, that a $\UST_\si$ can be sampled using weighted random walks, as in the Aldous-Broder algorithm, or using loop-erased weighted random walks, as in Wilson's algorithm. Moreover, another famous result of Kirchhoff states that the edge usage probabilities for a $\UST_\si$ are related to the effective resistances $\cReff_\si$ of the associated electrical network with edge conductances given by $\si$. More precisely, if $e\in E$, then
\begin{equation}\label{eq:peredgeeffres}
\eta_\si(e) := \bP_{\mu_\si}\left(e\in \underline{\ga}\right) = \si(e)\cReff_\si(e).
\end{equation}

\subsection{Homogeneous graphs}

We now introduce a generalization of the concept of homogeneous graph in~\cite{achpcst:disc-math2021} inspired by~\eqref{eq:beta-eta-verified}.

\begin{definition}\label{def:homogeneous}
Given a pmf $\beta\in\cP(E)$, the graph $G$ is called \emph{$\beta$-homogeneous} if there exists a pmf $\mu\in\mathcal{P}(\Gamma)$ whose edge usage probability vector $\eta_\mu$ satisfies
\begin{equation}\label{eq:beta-eta}
\frac{\eta_\mu(e)}{|V|-1} = \beta(e).
\end{equation}
$G$ is called \emph{strictly $\beta$-homogeneous} if there exists a $\UST_\si$ pmf $\mu_\sigma$ satisfying this condition.
\end{definition}

In other words, a graph is (resp. strictly)  $\beta$-homogeneous if and only if there is a (resp. $\UST_\si$) pmf on the spanning trees that induces the edge pmf $\beta$ through the procedure described in Section~\ref{sec:random-trees-edges}.

\begin{remark}
In the case that $\beta$ is the constant vector, the concept of $\beta$-homogenous coincides with the concept of \emph{homogeneous} in~\cite{achpcst:disc-math2021}. We shall drop the $\beta$ from the terminology in this case.
\end{remark}

One of the main results of this paper is the following theorem, which is proved in Section~\ref{sec:barriers}.
\begin{theorem}\label{thm:main}
The determinant in~\eqref{eq:min-det} is bounded away from zero if and only if $G$ is $\beta$-homogeneous.  A minimizing choice of $\sigma$ exists if and only if $G$ is strictly $\beta$-homogeneous.
\end{theorem}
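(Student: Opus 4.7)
The plan is to exploit the convex duality developed in Section~\ref{sec:dual-entropy}: by Lemma~\ref{lem:entropy-duality}, the primal value $p^*$ of~\eqref{eq:min-det} satisfies $p^* = \exp(H^*)$, where $H^*$ is the supremum of the Shannon entropy $H(\mu)$ over pmfs $\mu \in \mathcal{P}(\Gamma)$ whose edge usage vector satisfies $\eta_\mu(e) = (|V|-1)\beta(e)$ for every $e \in E$ (the natural generalization of the cone~\eqref{eq:cone-u} to arbitrary $\beta$). Under this dictionary, feasibility of the dual is precisely the condition that $G$ is $\beta$-homogeneous, so the theorem reduces to feasibility and attainment statements for the entropy maximization.

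For the first equivalence, if $G$ is $\beta$-homogeneous the dual feasible set is a nonempty closed (hence compact) subset of the probability simplex, and since $H$ is continuous there a maximizer exists, giving $p^* = \exp(H^*) > 0$. Conversely, if $G$ is not $\beta$-homogeneous then $(|V|-1)\beta$ lies outside the convex polytope $\co\{\mathcal{N}(\gamma,\cdot) : \gamma \in \Gamma\} \subset \mathbb{R}^E$, so the separating hyperplane theorem yields $v \in \mathbb{R}^E$ and $c \in \mathbb{R}$ with $v\cdot\mathcal{N}(\gamma,\cdot) \geq c$ for all $\gamma \in \Gamma$ and $(|V|-1)(v\cdot\beta) < c$. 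Setting $\bar v := v\cdot\beta$ and $\sigma_r(e) := \exp(-r(v(e)-\bar v))$, one checks that $\prod_e \sigma_r(e)^{\beta(e)} = 1$ while $\sigma_r[\gamma] \le \exp(-r[c - (|V|-1)\bar v])$ with $c - (|V|-1)\bar v > 0$, so $\sum_\gamma \sigma_r[\gamma] \to 0$ as $r \to \infty$ and $p^* = 0$.

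For the second equivalence, suppose first that $\sigma^*$ is a minimizer of~\eqref{eq:min-det}. Differentiating the Lagrangian in the variables $t(e) := \log\sigma(e)$ yields the Karush--Kuhn--Tucker condition $\sum_\gamma \mathcal{N}(\gamma, e)\sigma^*[\gamma] = \lambda\,\beta(e)$ for a single multiplier $\lambda$; summing over $e$ and using $\sum_e \eta_{\sigma^*}(e) = |V|-1$ (since every spanning tree has $|V|-1$ edges) together with $\beta(E) = 1$ pins down $\lambda = (|V|-1)\sum_\gamma \sigma^*[\gamma]$ and hence $\eta_{\sigma^*}(e) = (|V|-1)\beta(e)$, so $G$ is strictly $\beta$-homogeneous via $\mu_{\sigma^*}$. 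Conversely, if some $\mu_\sigma$ witnesses strict $\beta$-homogeneity, then since $\mu_\sigma$ is invariant under positive rescaling of $\sigma$ we may rescale to obtain $\prod_e \sigma(e)^{\beta(e)} = 1$; the KKT equation above then holds at this $\sigma$, and since the objective in $\log$-coordinates is a sum of exponentials of linear functions (hence convex) it is a global minimizer.

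The main obstacle I anticipate is the ``infeasibility implies $p^* = 0$'' half of the first equivalence, where one has to step outside the duality machinery and build an explicit minimizing sequence; the separating hyperplane theorem produces the right direction $v$, but one must reconcile it with the multiplicative constraint, which is exactly the role played by subtracting the $\beta$-weighted average $\bar v$ in the definition of $\sigma_r$. A secondary delicate point is the KKT computation in the second equivalence: the factor $|V|-1$ in strict $\beta$-homogeneity emerges only after combining the spanning tree cardinality identity with the normalization $\beta(E) = 1$.
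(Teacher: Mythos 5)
Your proof is correct, and while it rests on the same duality backbone (Lemma~\ref{lem:entropy-duality}), it departs from the paper's argument at the two hardest points. For ``not $\beta$-homogeneous $\Rightarrow$ infimum is zero,'' the paper simply reads this off from strong duality: when $\mathcal{U}_\beta$ is trivial the dual optimum is $\tilde\phi(0)=0$ (Lemmas~\ref{lem:slater-applied}, \ref{lem:homogeneous-char-U}, \ref{lem:entropy-duality}). You instead note that non-homogeneity means $(|V|-1)\beta$ lies outside the polytope $\co\{\mathcal{N}(\gamma,\cdot)\}$ and use a separating hyperplane to build an explicit minimizing sequence $\sigma_r(e)=\exp(-r(v(e)-\bar v))$; the computation checks out (the shift by $\bar v$ enforces the constraint, and $\ell_v(\gamma)-(|V|-1)\bar v\ge c-(|V|-1)\bar v>0$ kills every tree weight). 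This is redundant given that you already cite Lemma~\ref{lem:entropy-duality}, but it is more elementary and exhibits the degenerating direction concretely. The more substantive divergence is in ``strictly $\beta$-homogeneous $\Rightarrow$ minimizer exists'': the paper proves this by contrapositive via Lemma~\ref{lem:defective-mu}, whose proof is the technical heart of Section~\ref{sec:barriers} (reduction to biconnected components, extraction of the vanishing edge set $E_0$, and the edge-swapping Claim~\ref{cl:tree-norestrict}). You instead observe that a $\UST_\sigma$ witness of strict homogeneity, rescaled to satisfy the constraint, satisfies the stationarity condition of the convex reformulation~\eqref{eq:min-det-equiv}, and that for a convex objective with an affine equality constraint, stationarity of the Lagrangian at a feasible point is sufficient for global optimality. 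This is a genuinely shorter route to Theorem~\ref{thm:main} that sidesteps the combinatorial machinery entirely; the trade-off is that the paper's Lemma~\ref{lem:defective-mu} is not dispensable overall, since it is reused in the proof of Lemma~\ref{lem:sdense-is-shomogeneous} to produce a subgraph with the $\mu^*$-restriction property. Your remaining direction (minimizer $\Rightarrow$ strict homogeneity via the first-order condition) coincides with the paper's Lemma~\ref{lem:necessary-min}.
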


\subsection{Dense graphs}

An alternative characterization can also be stated in terms of a type of density measure on graphs.  In order to characterize a graph as \emph{dense}, we will compare its density to the density of its subgraphs.  To this end, we define a family of subgraphs.
\begin{definition}\label{def:subgraphs}
Let $H=(V_H,E_H)$ be a subgraph of $G=(V,E)$.  Then, $H$ is called \emph{edge-induced} if it can be obtained by removing all edges in $E\setminus E_H$ from $G$ and deleting any vertices with degree zero. Also, $H$ is called \emph{nontrivial} if $|E_H|\ge 1$.  In the following, we shall use $\S(G)$ to denote the family of all connected, nontrivial, edge-induced subgraphs of $G$.
\end{definition}

\begin{definition}
For $\beta\in\cP(E)$ and $H\in\S(G)$, define the \emph{$\beta$-density} of $H$ as
\begin{equation*}
\theta_\beta(H) := \frac{\beta(E_H)}{|V_H|-1},\quad\text{where}\quad
\beta(E_H)=\sum_{e\in E_H}\beta(e).
\end{equation*}
\end{definition}

\begin{remark}
The concept of $\beta$-density here is proportional to the $1$-density in~\cite{achpcst:disc-math2021} when $\beta$ is constant;
\begin{equation*}
  \theta_\beta(H) = \frac{|E_H|/|E|}{|V_H|-1} = \frac{1}{|E|}\cdot\frac{|E_H|}{|V_H|-1} = \frac{1}{|E|}\theta(H),
\end{equation*}
where $\theta(H)$ is the \emph{$1$-density} defined in~\cite{achpcst:disc-math2021}.
\end{remark}

\begin{definition}\label{def:beta-dense}
A graph $G$ is called \emph{$\beta$-dense} if $\theta_\beta(H) \le \theta_\beta(G)$ for all $H\in\S(G)$. On the other hand, $G$ is called \emph{strictly $\beta$-dense} if the inequality is strict whenever $H\ne G$.
\end{definition}

In Section~\ref{sec:homog-and-density} we prove the following two lemmas.

\begin{lemma}\label{lem:dense-is-homogeneous}
A graph is $\beta$-dense if and only if it is $\beta$-homogeneous.
\end{lemma}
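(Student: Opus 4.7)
The forward direction is the easy one: suppose $\mu\in\cP(\Ga)$ realizes $\eta_\mu(e)=(|V|-1)\beta(e)$ for every $e\in E$. For any $H\in\S(G)$ and any spanning tree $\gamma\in\Gamma$, the intersection $\gamma\cap E_H$ is a forest in $H$, hence $\sum_{e\in E_H}\mathcal{N}(\gamma,e)\le|V_H|-1$. Taking expectation with respect to $\mu$ gives $(|V|-1)\beta(E_H)=\sum_{e\in E_H}\eta_\mu(e)\le|V_H|-1$, which rearranges to $\theta_\beta(H)\le 1/(|V|-1)=\theta_\beta(G)$.

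The reverse direction (density implies homogeneity) is where the real work lies. I would reformulate $\beta$-homogeneity as the feasibility of
\begin{equation*}
\mathcal{N}^T\mu=(|V|-1)\beta,\qquad \mu\in\mathbb{R}^\Gamma_{\ge 0},
\end{equation*}
noting that the constraint $\mu(\Gamma)=1$ is automatic: summing the marginal equations over $e\in E$ gives $(|V|-1)\mu(\Gamma)=(|V|-1)\beta(E)=|V|-1$. Thus $\beta$-homogeneity is exactly the statement that $x:=(|V|-1)\beta$ lies in the convex hull of the incidence vectors of spanning trees, i.e., in the \emph{spanning tree polytope} of $G$. By Edmonds' classical description of this polytope (applied to the graphic matroid of $G$), membership is equivalent to the conditions $x\ge 0$, $x(E)=|V|-1$, and $x(E(S))\le|S|-1$ for every $S\subseteq V$, where $E(S)$ denotes the edges with both endpoints in $S$.

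The first two conditions follow immediately from the hypotheses on $\beta$. For the subset inequalities, I would reduce from an arbitrary vertex subset $S$ to the edge-induced connected subgraphs covered by Definition \ref{def:beta-dense}. Fix $S$ and decompose the edge-induced subgraph $G[E(S)]$ into its connected components with at least one edge, say $H_1,\dots,H_k\in\S(G)$, so that $E(S)=\bigsqcup_{i=1}^k E_{H_i}$ with the vertex sets $V_{H_i}$ disjoint and all contained in $S$. By $\beta$-density, $\beta(E_{H_i})\le(|V_{H_i}|-1)/(|V|-1)$ for each $i$, and summing yields
\begin{equation*}
\beta(E(S))\;\le\;\frac{\sum_{i=1}^k|V_{H_i}|-k}{|V|-1}\;\le\;\frac{|S|-1}{|V|-1},
\end{equation*}
using $\sum_i|V_{H_i}|\le|S|$ and $k\ge 1$ (the degenerate case $E(S)=\emptyset$ is trivial). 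Multiplying through by $|V|-1$ produces the required polytope inequality, and Edmonds' theorem delivers the desired $\mu$.

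The main obstacle is the mismatch between the density condition, which is stated only over connected edge-induced subgraphs, and the polytope description, which demands one inequality per vertex subset. The component decomposition in the preceding paragraph is the standard bookkeeping device that bridges this gap. An alternative, more self-contained route would apply Farkas' lemma directly: from an infeasibility certificate $y\in\mathbb{R}^E$ satisfying $\sum_{e\in\gamma}y(e)\ge 0$ for every spanning tree $\gamma$ yet $\beta^T y<0$, a Kruskal-style sorting argument on the level sets of $y$ extracts a connected edge-induced subgraph $H$ with $\theta_\beta(H)>\theta_\beta(G)$, contradicting $\beta$-density; the Edmonds route is, however, shorter and more transparent.
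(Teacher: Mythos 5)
Your proof is correct, and while the easy direction coincides with the paper's, your argument for the hard direction is genuinely different. For ``homogeneous $\Rightarrow$ dense'' you and the paper do the same thing: intersect a random tree with $E_H$, bound the forest by $|V_H|-1$ edges, and take expectations (the paper packages this as Lemma~\ref{lem:homog-density}, built on Lemma~\ref{lem:u-beta}). For the converse, the paper never invokes polyhedral combinatorics: it sets up the cross-entropy minimization~\eqref{eq:KL-beta} over achievable edge marginals $\hat\beta$, derives the first-order condition that the $v$-length $\ell_v(\gamma)$ with $v=\beta/\beta^*$ is at most $|V|-1$ with equality on $\supp\mu^*$ (Lemma~\ref{lem:necessary-KL-beta}), and then shows that a component $H$ of the level set where $v$ is maximal has the $\mu^*$-restriction property and density $M\theta_{\beta^*}(H)>\theta_\beta(G)$ (Lemma~\ref{lem:hat-beta-restricts}). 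You instead recognize $\beta$-homogeneity as membership of $(|V|-1)\beta$ in the spanning tree polytope and cite Edmonds' description; the component decomposition of $G[E(S)]$ that converts the vertex-subset inequalities $x(E(S))\le|S|-1$ into the connected edge-induced subgraph inequalities of Definition~\ref{def:beta-dense} is exactly the right bookkeeping, and your observation that $\mu(\Gamma)=1$ is forced by summing the marginals is also correct. The trade-off: your route is shorter and makes the statement transparently a polytope fact, but it outsources the core to Edmonds' theorem and, contrapositively, only certifies non-membership rather than exhibiting a violating subgraph; the paper's variational route is self-contained and, more importantly, manufactures the objects $\beta^*$, $\mu^*$, and the restriction-property subgraph $H$ that are reused downstream (in Lemma~\ref{lem:sdense-is-shomogeneous} and the deflation discussion), which a bare appeal to Edmonds would not provide. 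Either proof is acceptable for the lemma as stated.
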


\begin{lemma}\label{lem:sdense-is-shomogeneous}
A graph is strictly $\beta$-dense if and only if it is strictly $\beta$-homogeneous and biconnected.
\end{lemma}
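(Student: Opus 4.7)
The plan is to exploit, in both directions, the identity
\[
\sum_{e\in E_H}\eta_{\mu}(e) \;=\; \sum_{\gamma\in\Gamma}\mu(\gamma)\,|\gamma\cap E_H|,
\]
combined with the combinatorial bound $|\gamma\cap E_H|\le|V_H|-1$ (since $\gamma\cap E_H$ is a forest on $V_H$).

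$(\Leftarrow)$ Assume $G$ is strictly $\beta$-homogeneous with witness $\sigma$, and biconnected. Lemma~\ref{lem:dense-is-homogeneous} already gives $\beta$-density, so I only need to upgrade to strict density. Suppose toward contradiction that $\theta_\beta(H)=\theta_\beta(G)$ for some proper $H\in\S(G)$. The identity above with $\mu=\mu_\sigma$ together with the full support of $\mu_\sigma$ forces $|\gamma\cap E_H|=|V_H|-1$ for every $\gamma\in\Gamma$. I then produce a spanning tree $T_G$ with $|T_G\cap E_H|<|V_H|-1$, a contradiction. If $E_H\subsetneq E[V_H]$, pick any chord $e\in E[V_H]\setminus E_H$ and extend $\{e\}$ to a spanning tree: the unique path in $T_G$ between the endpoints of $e$ is $e$ itself, so $T_G\cap E_H$ splits $V_H$ into at least two components. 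If $V_H=V$, apply the same argument to any $e\in E\setminus E_H$. The remaining case is $V_H\subsetneq V$ with $E_H=E[V_H]$; set $W=V\setminus V_H$ and let $k$ be the number of connected components of $G\setminus E_H$. Any spanning tree of $G$ can be built as a union of spanning trees of the $k$ components plus $k-1$ bridging edges from $E_H$, giving $|T_G\cap E_H|=k-1$. I claim $k\le|V_H|-1$. Since $E_H\subseteq E[V_H]$, every component of $G\setminus E_H$ must contain at least one $V_H$-vertex (otherwise that component would be disconnected from $V_H$ in $G$). If every $V_H$-vertex sat in its own component, then choosing any $v\in V_H$ whose component properly exceeds $\{v\}$ (such a $v$ must exist, as $W\ne\emptyset$ and $G$ is connected), the $W$-portion of $v$'s component would attach to $V_H\setminus\{v\}$ only through $v$, making $v$ a cut vertex---ruled out by biconnectedness. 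So $k\le|V_H|-1$ and the contradiction follows.

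$(\Rightarrow)$ Assume $G$ is strictly $\beta$-dense. If $G$ had a cut vertex $v$, writing $G=G_1\cup G_2$ with $G_1\cap G_2=\{v\}$ gives
\[
\theta_\beta(G)\;=\;\frac{(|V_{G_1}|-1)\,\theta_\beta(G_1)+(|V_{G_2}|-1)\,\theta_\beta(G_2)}{(|V_{G_1}|-1)+(|V_{G_2}|-1)},
\]
a convex combination, so $\theta_\beta(G_i)\ge\theta_\beta(G)$ for at least one $i$, contradicting strict $\beta$-density; hence $G$ is biconnected. For strict $\beta$-homogeneity I consider the maximum entropy problem
\[
\max_{\mu\in\cP(\Gamma)}H(\mu)\qquad\text{subject to}\qquad \eta_\mu=(|V|-1)\beta.
\]
Strict $\beta$-density combined with $\beta>0$ places the target vector $(|V|-1)\beta$ in the relative interior of the spanning tree polytope of $G$, so the feasible set contains a full-support pmf. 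By strict concavity of entropy together with the gradient blow-up of $-\log$ at $\mu(\gamma)=0$, the maximizer $\mu^*$ is itself fully supported. The KKT stationarity condition then forces $\mu^*(\gamma)=c\prod_{e\in E}\sigma(e)^{\cN(\gamma,e)}$ for some $\sigma\in\R^E_{>0}$, i.e., $\mu^*=\mu_\sigma$ is a $\UST_\sigma$-pmf realizing $\eta_{\mu_\sigma}=(|V|-1)\beta$, which is exactly strict $\beta$-homogeneity.

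The main obstacle is the structural step in $(\Leftarrow)$: converting biconnectedness into the sharp component bound $k\le|V_H|-1$ for $G\setminus E_H$. The cut-vertex obstruction---if every $V_H$-vertex inhabited its own $G\setminus E_H$-component, then whichever $V_H$-vertex inherited some $W$-vertices would be a cut vertex---is the delicate core of the argument and the reason biconnectedness appears in the statement.
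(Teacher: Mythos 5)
Your proof is correct in both directions, but it departs from the paper's route in two substantive ways, so a comparison is worthwhile. The biconnectedness argument in $(\Rightarrow)$ is the same convex-combination computation the paper uses. For the direction ``strictly $\beta$-homogeneous $+$ biconnected $\Rightarrow$ strictly $\beta$-dense,'' the paper invokes Claim~\ref{cl:tree-norestrict} (via Lemma~\ref{lem:u-restriction}) to produce \emph{two} trees $\ga,\tilde\ga$ related by an edge swap across the boundary of $E_H$, which cannot both restrict to trees on $H$; you instead directly build \emph{one} spanning tree with $|T_G\cap E_H|<|V_H|-1$ through a three-case analysis, with the delicate case handled by counting components of $G\setminus E_H$ and using biconnectedness to rule out $k=|V_H|$ via a cut-vertex obstruction. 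Your argument is self-contained and correct (I checked the cut-vertex step: the first $E_H$-edge on any path from a $W$-vertex of $C_v$ to $V_H\setminus\{v\}$ must leave from the unique $V_H$-vertex of $C_v$), whereas the paper's version reuses machinery it already needs for Lemma~\ref{lem:defective-mu}. For ``strictly $\beta$-dense $\Rightarrow$ strictly $\beta$-homogeneous,'' the routes genuinely diverge: the paper argues by contradiction, feeding the non-existence of a determinant minimizer (Corollary~\ref{cor:minimizer-is-WUST}) into the minimizing-sequence analysis of Lemma~\ref{lem:defective-mu} to extract a subgraph with the $\mu^*$-restriction property, then contradicting strict density via Lemma~\ref{lem:homog-density}; you argue directly through the spanning tree polytope, placing $(|V|-1)\beta$ in its relative interior, deducing a fully supported feasible pmf, and identifying the constrained entropy maximizer as a $\UST_\si$ pmf by the exponential-family form of the stationarity conditions. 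Your route is cleaner and avoids the minimizing-sequence machinery, but the one-sentence polytope step is doing real work and is asserted rather than proved: you need Edmonds' vertex/facet description of the spanning tree polytope, the passage from strict density of \emph{connected} edge-induced subgraphs to strictness of \emph{all} rank inequalities $(|V|-1)\beta(F)<r(F)$ for $\emptyset\ne F\subsetneq E$ (by decomposing $F$ into components), and the fact that a relative-interior point of a polytope is a positive convex combination of \emph{all} its vertices. These are standard facts and the conclusion is correct, but in a self-contained write-up they would need to be cited or proved.
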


This provides an alternative to Theorem~\ref{thm:main}, which is also proved in Section~\ref{sec:homog-and-density}.
\begin{theorem}\label{thm:main-dense}
The determinant in~\eqref{eq:min-det} is bounded away from zero if and only if $G$ is $\beta$-dense.  A minimizing choice of $\sigma$ exists if and only if $G$ is $\beta$-dense and every biconnected component of $G$ is strictly $\beta$-dense.
\end{theorem}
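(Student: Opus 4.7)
The strategy is to derive this theorem from Theorem~\ref{thm:main} by translating $\beta$-homogeneity into $\beta$-density via Lemmas~\ref{lem:dense-is-homogeneous} and~\ref{lem:sdense-is-shomogeneous}, together with a block decomposition to cover graphs with cut vertices. The first claim is immediate: by Theorem~\ref{thm:main} the determinant in~\eqref{eq:min-det} is bounded away from zero iff $G$ is $\beta$-homogeneous, and Lemma~\ref{lem:dense-is-homogeneous} turns this into $\beta$-density. Similarly, Theorem~\ref{thm:main} reduces existence of a minimizer to strict $\beta$-homogeneity, which by Lemma~\ref{lem:sdense-is-shomogeneous} is equivalent to strict $\beta$-density when $G$ is biconnected; this already handles the biconnected case since then $G$ is its only block.

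For general $G$, let $B_1,\ldots,B_k$ denote the blocks of $G$. The essential structural fact is that each spanning tree of $G$ is an edge-disjoint union of spanning trees of the $B_i$. Consequently the $\UST_\sigma$ pmf $\mu_\sigma$ factors as a product of $\UST_{\sigma|_{E_{B_i}}}$ pmfs on the blocks, and edge usage probabilities agree: $\eta_\sigma(e)=\eta^{B_i}_{\sigma|_{E_{B_i}}}(e)$ for $e\in E_{B_i}$. Summing the condition $\eta_\sigma=(|V|-1)\beta$ over $E_{B_i}$ forces the normalizations
\begin{equation*}
\beta(E_{B_i})=\frac{|V_{B_i}|-1}{|V|-1}\qquad (i=1,\ldots,k),
\end{equation*}
after which the rescaled $\beta_i(e):=\beta(e)(|V|-1)/(|V_{B_i}|-1)$ lies in $\cP_+(E_{B_i})$ and the remaining condition on $B_i$ becomes strict $\beta_i$-homogeneity. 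Applying Lemma~\ref{lem:sdense-is-shomogeneous} to each (biconnected) block replaces this with strict $\beta_i$-density of $B_i$, which agrees with strict $\beta$-density of $B_i$ since $\theta_{\beta_i}=\theta_\beta/\beta(E_{B_i})$ on $\S(B_i)$.

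What remains is to identify the combination (block normalizations) $+$ (strict $\beta$-density of every block) with the theorem's condition ``$G$ is $\beta$-dense and every block is strictly $\beta$-dense.'' One direction is easy: $\beta$-density of $G$ gives $\beta(E_{B_i})\le(|V_{B_i}|-1)/(|V|-1)$, and because both sides sum to $1$ over $i$, the inequalities saturate. The harder direction, which I expect to be the main obstacle, is the converse: given the block normalizations plus strict $\beta$-density on each $B_i$, deduce $\beta$-density of $G$. For this I would decompose an arbitrary connected $H\in\S(G)$ through its own blocks $H_{i,j}$ (each an edge-induced subgraph of some $B_i$, using that any biconnected subgraph of $G$ lies in a single block), apply the additive relation $|V_H|-1=\sum_{i,j}(|V_{H_{i,j}}|-1)$, and sum the bounds $\beta(E_{H_{i,j}})\le(|V_{H_{i,j}}|-1)/(|V|-1)$ to conclude $\theta_\beta(H)\le 1/(|V|-1)=\theta_\beta(G)$. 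Bookkeeping this decomposition and verifying the block-containment property for $H$ is the most delicate step.
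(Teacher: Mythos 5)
Your proposal is correct and follows essentially the same route as the paper: both reduce to Theorem~\ref{thm:main}, translate homogeneity into density via Lemmas~\ref{lem:dense-is-homogeneous} and~\ref{lem:sdense-is-shomogeneous}, and handle cut vertices by decomposing spanning trees and the determinant over the biconnected components as in~\eqref{eq:gamma-decomposition}--\eqref{eq:mass-decomposition}, finishing with the convex-combination bound on $\theta_\beta(H)$ for arbitrary $H\in\S(G)$. If anything, you are more explicit than the paper about verifying the block normalizations $\beta(E_{B_i})=(|V_{B_i}|-1)/(|V|-1)$ and about splitting $H$ through its own blocks, steps the paper's proof treats tersely.
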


Once again, the three example graphs in Section \ref{ssec:examples} can be instructive. The paw graph is not $1$-dense (i.e., not $\beta$-dense with constant $\beta$); if $H$ is the subgraph induced by removing the pendant edge, then
\begin{equation*}
  \theta(H) = \frac{3}{2} > \frac{4}{3} = \theta(G).
\end{equation*}
By an exhaustive search, the reader can verify that the diamond graph is strictly $1$-dense. The house graph, on the other hand, is $1$-dense, but not strictly so; the subgraph comprising the triangle at the top of the house has the same $1$-density as the entire graph.

After this work was completed, we learned that minimum determinant problems like (\ref{eq:min-det}) have been studied in other contexts. For instance, we believe that \cite[Theorem 2.10]{anari-gharan-vinzant:duke2021} is related to the optimal weights $\si^*$ we find when $G$ is strictly homogeneous. 

Finally, the results in this paper can be extended to the generality of matroids, see \cite{huytruong:thesis}.

The remainder of this paper is structured as follows.  In Section~\ref{sec:minimizers}, we derive necessary and sufficient conditions for the existence of a minimizer for the determinant.  In Section~\ref{sec:dual-entropy}, we derive a Lagrangian dual problem to the minimum determinant problem; this dual takes the form of an entropy maximization problem.  In Section~\ref{sec:minimizing-sequences}, we study minimizing sequences for the determinant and derive necessary and sufficient conditions for the determinant to be bounded away from zero.  In Section~\ref{sec:barriers}, we summarize the barriers to the existence of a minimizer of the determinant and prove Theorem~\ref{thm:main}. In Section \ref{sec:homog-and-density}, we describe the connection of homogeneity with denseness of subgraphs, and prove Theorem \ref{thm:main-dense}.
In Section \ref{sec:det-bounds}, we explore general determinant lower bounds. Finally, in Section \ref{sec:connections-to-sptmod}, we apply our results to spanning tree modulus.

\section{Minimizers of the determinant}\label{sec:minimizers}

As remarked in Section~\ref{sec:introduction}, the determinant is positively homogeneous of degree $|V|-1$.  The constraint function in~\eqref{eq:min-det}, on the other hand, is positively homogeneous of degree $1$.  Thus, the function
\begin{equation}\label{eq:g-sigma}
g(\sigma) := \log\sum_{\gamma\in\Gamma}\sigma[\gamma] - (|V|-1)\sum_{e\in E}\beta(e)\log\sigma(e)
\end{equation}
is positively homogeneous of degree zero.  Using a standard trick from convex optimization, we can see that any minimizer of~\eqref{eq:min-det} is a minimizer of $g$, and, likewise, any minimizing sequence of~\eqref{eq:min-det} is a minimizing sequence of $g$. This allows a simple characterization of a minimizer.

\begin{lemma}\label{lem:necessary-min}
Suppose $\sigma$ is a minimizer of the determinant in~\eqref{eq:min-det}.  Then $G$ is strictly $\beta$-homogeneous.  In particular, if $\mu_{\sigma}$ is the $\UST_\si$ associated to $\si$, and $\eta_{\sigma}$ is the edge usage probability vector for $\mu_\si$,  then $\eta_\sigma$ and $\beta$ satisfy~\eqref{eq:beta-eta}.
\end{lemma}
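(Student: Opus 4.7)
The plan is to convert the constrained minimization in~\eqref{eq:min-det} into the unconstrained minimization of the homogeneous-degree-zero function $g$ in~\eqref{eq:g-sigma}, and then read off the first-order optimality condition.

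First I would justify the reduction. If $\sigma$ minimizes~\eqref{eq:min-det}, then for every $t>0$ the rescaled weight $t\sigma$ satisfies
\begin{equation*}
g(t\sigma) = g(\sigma),
\end{equation*}
since $g$ is positively homogeneous of degree zero (the degree-$(|V|-1)$ homogeneity of $\sum_\gamma \sigma[\gamma]$ gives a $\log$ shift of $(|V|-1)\log t$, which is exactly cancelled by the pull-through of the $\log$ on the $\beta$-sum). For any $\tau\in\mathbb{R}^E_{>0}$, rescaling $\tau$ by $t:=\prod_{e\in E}\tau(e)^{-\beta(e)}$ produces a feasible point $t\tau$ of~\eqref{eq:min-det} with the same value of $g$ as $\tau$, and on feasible points $g$ coincides with the logarithm of the objective of~\eqref{eq:min-det}. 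Hence $\sigma$ is an unconstrained minimizer of $g$ on the open set $\mathbb{R}^E_{>0}$.

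Next I would take partial derivatives. Differentiating~\eqref{eq:g-sigma} in the variable $\sigma(e)$, and using that $\sigma[\gamma]=\prod_{e'}\sigma(e')^{\mathcal N(\gamma,e')}$ so $\partial_{\sigma(e)}\sigma[\gamma]=\mathcal N(\gamma,e)\sigma[\gamma]/\sigma(e)$, yields
\begin{equation*}
\frac{\partial g}{\partial\sigma(e)}(\sigma) \;=\; \frac{1}{\sigma(e)}\,\frac{\sum_{\gamma\in\Gamma}\mathcal N(\gamma,e)\,\sigma[\gamma]}{\sum_{\gamma\in\Gamma}\sigma[\gamma]} \;-\; \frac{(|V|-1)\,\beta(e)}{\sigma(e)}.
\end{equation*}
Recognizing that $\mu_\sigma(\gamma)=\sigma[\gamma]/\sum_{\gamma'}\sigma[\gamma']$ is precisely the $\UST_\sigma$ pmf, the first fraction equals the edge usage probability $\eta_\sigma(e)=\mathbb P_{\mu_\sigma}(e\in\rv{\gamma})$.

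Finally, since $\sigma$ lies in the open domain $\mathbb{R}^E_{>0}$, the first-order condition $\partial g/\partial\sigma(e)=0$ holds for every $e\in E$. Multiplying through by $\sigma(e)>0$ gives
\begin{equation*}
\eta_\sigma(e) \;=\; (|V|-1)\,\beta(e) \quad\text{for all }e\in E,
\end{equation*}
which is exactly the condition~\eqref{eq:beta-eta}. Thus $\mu_\sigma$ is a $\UST_\sigma$ pmf whose edge usage probability vector witnesses the strict $\beta$-homogeneity of $G$, completing the proof.

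The only subtle step is the initial reduction; the rest is a direct gradient computation. The reduction is not an obstacle so much as an observation, since the degree-zero homogeneity of $g$ means there are no active inequality constraints to worry about and no Lagrange multiplier needs to be tracked.
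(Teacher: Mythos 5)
Your proof is correct and follows essentially the same route as the paper: reduce to the unconstrained minimization of the degree-zero homogeneous function $g$ in~\eqref{eq:g-sigma}, then set the gradient to zero and identify the resulting condition with~\eqref{eq:beta-eta}. Your explicit justification of the reduction step (which the paper dismisses as ``a standard trick'') is a welcome addition but does not change the argument.
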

\begin{proof}
The gradient of $g$ must vanish at a minimizer.  Note that for any $\gamma\in\Gamma$ and any $e\in E$, we have
\begin{equation*}
\frac{\partial}{\partial\sigma(e)}\sigma[\gamma] = \frac{\partial}{\partial\sigma(e)}\prod_{e'\in E}\sigma(e')^{\mathcal{N}(\gamma,e')}
= \frac{1}{\sigma(e)}\mathcal{N}(\gamma,e)\sigma[\gamma].
\end{equation*}
It follows that, for all edges $e\in E$,
\begin{equation*}
\begin{split}
0 &= \frac{1}{\sigma(e)}\left(\sum_{\gamma\in\Gamma}\sigma[\gamma]\right)^{-1}\sum_{\gamma\in\Gamma}\mathcal{N}(\gamma,e)\sigma[\gamma]
- (|V|-1)\cdot\frac{\beta(e)}{\sigma(e)}\\
&= \frac{1}{\sigma(e)}
\left(
\sum_{\gamma\in\Gamma}\mathcal{N}(\gamma,e)\mu_{\sigma}(\gamma) - (|V|-1)\beta(e)
\right),
\end{split}
\end{equation*}
from which the lemma follows.
\end{proof}

\section{Duality and entropy maximization}\label{sec:dual-entropy}

In this section, we derive a Lagrangian dual problem to the minimum determinant problem~\eqref{eq:min-det}.  We begin with a convenient change of variables.  For any set of positive weights $\sigma$, we define the variables
\begin{equation*}
r(e) = \log\sigma(e)\qquad\text{and}\qquad
\tau(\gamma) = \log\sigma[\gamma].
\end{equation*}
Note that $\tau$ is related to $r$ by the formula
\begin{equation*}
\tau(\gamma) = \log\prod_{e\in E}\sigma(e)^{\mathcal{N}(\gamma,e)} = \sum_{e\in E}\mathcal{N}(\gamma,e)r(e).
\end{equation*}
In these variables,~\eqref{eq:min-det} becomes the convex problem
\begin{equation}\label{eq:min-det-equiv}
\begin{split}
\underset{r\in\mathbb{R}^E,\tau\in\mathbb{R}^\Gamma}{\text{minimize}}\quad&\sum_{\gamma\in\Gamma}\exp\{\tau(\gamma)\}\\
\text{subject to}\quad&\sum_{e\in E}\beta(e)r(e) = 0\\
& \sum_{e\in E}\mathcal{N}(\gamma,e)r(e) \le \tau(\gamma).
\end{split}
\end{equation}
(It is valid to introduce the inequality in the last constraint since the objective function is increasing in each $\tau(\gamma)$.)

Introducing the dual variables $u\in\mathbb{R}^\Gamma_{\ge 0}$ and $t\in\mathbb{R}$, we can write the Lagrangian function as
\begin{align*}
& \mathcal{L}(r,\tau,u,t) \\
&= \sum_{\gamma\in\Gamma}\exp\{\tau(\gamma)\} + 
\sum_{\gamma\in\Gamma}u(\gamma)\left(\sum_{e\in E}\mathcal{N}(\gamma,e)r(e)-\tau(\gamma)\right) - t\sum_{e\in E}\beta(e)r(e)\\
&= \sum_{\gamma\in\Gamma}\exp\{\tau(\gamma)\} - \sum_{\gamma\in\Gamma}u(\gamma)\tau(\gamma)
+ \sum_{e\in E}r(e)\left(\sum_{\gamma\in\Gamma}\mathcal{N}(\gamma,e)u(\gamma) - t\beta(e)\right).
\end{align*}
The dual objective function is defined as
\begin{equation*}
\phi(u,t) := \inf_{r,\tau}\mathcal{L}(r,\tau,u,t).
\end{equation*}

\begin{lemma}\label{lem:slater-applied}
There exists a pair $(u^*,t^*)\in\mathbb{R}^\Gamma_{\ge 0}\times\mathbb{R}$ such that
\begin{align*}
& \phi(u^*,t^*) \\
& =\inf\left\{\sum_{\gamma\in\Gamma}\sigma[\gamma] : \sigma\in\mathbb{R}^E_{>0},\prod_{e}\sigma(e)^{\beta(e)}=1\right\} \\
& = \sup\left\{\phi(u,t):{(u,t)\in\mathbb{R}^\Gamma_{\ge 0}\times\mathbb{R}}\right\}.
\end{align*}
\end{lemma}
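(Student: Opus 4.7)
My plan is to exploit the convexity of the reformulated problem~(\ref{eq:min-det-equiv}) and apply a classical strong-duality result for convex programs with affine constraints.

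First I would verify that~(\ref{eq:min-det-equiv}) is a faithful reformulation of the original minimum determinant problem, in the sense that the two infima agree. The substitution $\sigma(e) = e^{r(e)}$ converts the multiplicative constraint $\prod_e \sigma(e)^{\beta(e)} = 1$ into the affine constraint $\sum_e \beta(e) r(e) = 0$. Moreover, since $\sum_\gamma \exp\{\tau(\gamma)\}$ is strictly increasing in each $\tau(\gamma)$, the inequality $\tau(\gamma) \ge \sum_e \mathcal{N}(\gamma,e)r(e)$ is tight at any optimum, in which case $\exp\{\tau(\gamma)\} = \sigma[\gamma]$. Thus the infimum of~(\ref{eq:min-det-equiv}) equals $\inf_\sigma \sum_\gamma \sigma[\gamma]$ under the original normalization.

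Next I would invoke strong duality. Problem~(\ref{eq:min-det-equiv}) has a convex, finite-everywhere objective on $\mathbb{R}^E \times \mathbb{R}^\Gamma$, one affine equality constraint, and $|\Gamma|$ affine inequality constraints; its feasible set is nonempty (take $r \equiv 0$, $\tau \equiv 0$), and the objective is bounded below by $0$. In this entirely polyhedral setting, the strong-duality theorem for convex programs with affine constraints applies---the refined form of Slater's condition is vacuous since no inequality is nonaffine---and yields both a zero duality gap and the existence of a dual optimizer $(u^*, t^*) \in \mathbb{R}^\Gamma_{\ge 0} \times \mathbb{R}$.

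The main subtlety is that the primal itself need not be attained (as illustrated by the house graph of Example~\ref{ex:det-house}), so one cannot extract Lagrange multipliers directly from a primal optimizer. The affineness of every constraint is precisely what salvages dual attainment in this degenerate case; if any inequality were strictly convex, an additional Slater point would be required. For concreteness, I would also compute $\phi(u,t)$ explicitly: minimization in each $\tau(\gamma)$ yields $u(\gamma) - u(\gamma)\log u(\gamma)$ (with convention $0\log 0 := 0$), while minimization in $r$ forces the feasibility condition $\sum_\gamma \mathcal{N}(\gamma,e)u(\gamma) = t\beta(e)$ for all $e \in E$, lest $\phi(u,t) = -\infty$. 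This closed form is what the following section will use to recast the strong-duality assertion as the entropy maximization problem~(\ref{eq:max-entropy-u}).
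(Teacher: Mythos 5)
Your proposal is correct and follows essentially the same route as the paper: reformulate as the convex program~(\ref{eq:min-det-equiv}), note feasibility at $r\equiv 0$, $\tau\equiv 0$ and boundedness below, and invoke the (refined, affine-constraint) form of Slater's condition to obtain zero duality gap together with dual attainment. Your additional observations---that the equivalence of the two infima rests on the objective being increasing in each $\tau(\gamma)$, and that dual attainment is the key point since the primal need not be attained---are consistent with what the paper uses implicitly and in the surrounding lemmas.
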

\begin{proof}
This is a consequence of Slater's condition (see~\cite[Sec.~5.2.3]{boyd2004convex}).  The primal problem,~\eqref{eq:min-det-equiv}, is a standard convex optimization problem with affine constraints.  It is feasible (e.g., $r=0$, $\tau=0$ is feasible), and bounded from below.  Therefore, Slater's condition implies that strong duality holds and that the maximum is attained in the dual problem.
\end{proof}

We now explore properties of the maximizer for the dual problem.

\begin{lemma}\label{lem:u-star-beta}
If $(u^*,t^*)\in\mathbb{R}^E_{\ge 0}\times\mathbb{R}$ maximizes $\phi$, then $t^*=(|V|-1)u^*(\Gamma)$ and
\begin{equation*}
\sum_{\gamma\in\Gamma}\mathcal{N}(\gamma,e)u^*(\gamma) = t^*\beta(e)\quad\text{for all }e\in E.
\end{equation*}
\end{lemma}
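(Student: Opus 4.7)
The plan is to exploit the linear dependence of the Lagrangian on the $r$ variable. Separating out the three pieces, we write
\begin{equation*}
\mathcal{L}(r,\tau,u,t) = \underbrace{\sum_{\gamma\in\Gamma}\bigl(e^{\tau(\gamma)} - u(\gamma)\tau(\gamma)\bigr)}_{\text{depends only on }\tau} + \sum_{e\in E}r(e)\Bigl(\sum_{\gamma\in\Gamma}\mathcal{N}(\gamma,e)u(\gamma) - t\beta(e)\Bigr).
\end{equation*}
Because the second sum is linear in the unconstrained vector $r\in\mathbb{R}^E$, taking the infimum over $r$ yields $-\infty$ whenever any coefficient $\sum_{\gamma}\mathcal{N}(\gamma,e)u(\gamma) - t\beta(e)$ is nonzero.

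Now invoke Lemma~\ref{lem:slater-applied}: the maximum value $\phi(u^*,t^*)$ equals the primal optimum, which is a finite, positive number (the primal is feasible with, e.g., $\sigma\equiv 1$). In particular $\phi(u^*,t^*)>-\infty$, so the coefficient of every $r(e)$ must vanish at $(u^*,t^*)$. This gives the second conclusion:
\begin{equation*}
\sum_{\gamma\in\Gamma}\mathcal{N}(\gamma,e)u^*(\gamma) = t^*\beta(e)\qquad\text{for all }e\in E.
\end{equation*}

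To extract the first conclusion, sum the above identity over $e\in E$. On the right side, $\sum_{e\in E}\beta(e)=\beta(E)=1$ by~\eqref{eq:beta-conditions}, producing $t^*$. On the left side, swap the order of summation and use the fact that every spanning tree has exactly $|V|-1$ edges, so $\sum_{e\in E}\mathcal{N}(\gamma,e)=|V|-1$ for every $\gamma\in\Gamma$. This yields $(|V|-1)\sum_{\gamma}u^*(\gamma)=(|V|-1)u^*(\Gamma)$, so $t^*=(|V|-1)u^*(\Gamma)$. No real obstacle is expected: the whole argument is the standard device of killing a linear term in an unconstrained variable, combined with the normalization $\beta(E)=1$ and the counting identity for edges in spanning trees.
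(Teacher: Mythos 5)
Your proof is correct and follows essentially the same route as the paper: the coefficient of each $r(e)$ in the Lagrangian must vanish at a maximizer since otherwise the infimum over the unconstrained $r$ is $-\infty$, and then summing the resulting identity over $e\in E$ with $\beta(E)=1$ and $\sum_{e\in E}\mathcal{N}(\gamma,e)=|V|-1$ gives $t^*=(|V|-1)u^*(\Gamma)$. The only cosmetic difference is that you certify finiteness of $\phi(u^*,t^*)$ via strong duality with the primal, whereas the paper simply notes that $\phi(u^*,t^*)=-\infty$ would contradict optimality; both are valid.
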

\begin{proof}
The second part of the conclusion can be seen from the Lagrangian.  If the equality fails for any edge $e$, then, by sending $r(e)\to\pm\infty$, one can see that $\phi(u^*,t^*)=-\infty$, contradicting optimality.  Once this is established, summing over all edges provides the value of $t^*$.
\end{proof}

In particular, Lemma \ref{lem:u-star-beta} implies that $u^*\in\mathcal{U}_\beta$, where $\mathcal{U}_\beta$ is the convex cone defined as
\begin{equation}\label{eq:cone-ubeta}
\mathcal{U}_\beta := \left\{u\in\mathbb{R}^\Gamma_{\ge 0} : \sum_{\gamma\in\Gamma}\mathcal{N}(\gamma,e) u(\gamma) = (|V|-1)u(\Gamma)\beta(e)\text{ for all }e\in E \right\}.
\end{equation}
The connection between $u$ and $\beta$ in the definition of $\mathcal{U}_\beta$ provides a convenient formula that we shall use repeatedly in what follows.
\begin{lemma}\label{lem:u-beta}
Let $\beta\in\cP_+(E)$ and let $u\in\mathcal{U}_\beta$.  Then, for any $E'\subseteq E$,
\begin{equation*}
u(\Gamma)\beta(E') = \frac{1}{|V|-1}\sum_{\gamma\in\Gamma}|\gamma\cap E'|u(\gamma)
\end{equation*}
\end{lemma}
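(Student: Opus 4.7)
The plan is to derive the identity by summing the defining relation of $\mathcal{U}_\beta$ over all edges in $E'$ and then interchanging the order of summation. This is essentially a Fubini-style manipulation and no real obstacle is expected; the only thing to keep track of is the bookkeeping of which index is being summed first.

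Concretely, I would first fix $u\in\mathcal{U}_\beta$ and $E'\subseteq E$. By the definition of $\mathcal{U}_\beta$ in~\eqref{eq:cone-ubeta}, for each $e\in E$ we have
\[
\sum_{\gamma\in\Gamma}\mathcal{N}(\gamma,e)u(\gamma) = (|V|-1)u(\Gamma)\beta(e).
\]
Summing both sides over $e\in E'$ yields
\[
\sum_{e\in E'}\sum_{\gamma\in\Gamma}\mathcal{N}(\gamma,e)u(\gamma) = (|V|-1)u(\Gamma)\sum_{e\in E'}\beta(e) = (|V|-1)u(\Gamma)\beta(E').
\]

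Next, I would swap the order of summation on the left-hand side, which is legal since the sums are finite, and use the observation that for a fixed spanning tree $\gamma$, the definition of the edge usage matrix~\eqref{eq:usage-matrix} gives
\[
\sum_{e\in E'}\mathcal{N}(\gamma,e) = |\gamma\cap E'|.
\]
Therefore
\[
\sum_{e\in E'}\sum_{\gamma\in\Gamma}\mathcal{N}(\gamma,e)u(\gamma) = \sum_{\gamma\in\Gamma}u(\gamma)\sum_{e\in E'}\mathcal{N}(\gamma,e) = \sum_{\gamma\in\Gamma}|\gamma\cap E'|u(\gamma).
\]
Combining the two displayed equalities and dividing by $|V|-1$ gives the claimed identity.

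The main subtle point, if anything, is making sure $|V|-1 \neq 0$, which holds because $G$ is connected with at least one edge (so $|V|\ge 2$); this is already implicit in the setup of the paper. Otherwise, the proof is a one-line rearrangement, so I would keep the exposition short.
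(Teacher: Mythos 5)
Your proof is correct and is exactly the argument the paper gives (the paper's proof is the one-liner ``sum the defining equality of $\mathcal{U}_\beta$ over $e\in E'$''); you have simply spelled out the interchange of summation and the identity $\sum_{e\in E'}\mathcal{N}(\gamma,e)=|\gamma\cap E'|$. No issues.
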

\begin{proof}
This follows by summing the equality in the definition of $\mathcal{U}_\beta$ over $e\in E'$.
\end{proof}

The set $\mathcal{U}_\beta$ always contains the origin, $u=0$, in which case we say it is trivial.  Next, we show that whether $0$ is the only point in $\cU_\be$ depends on whether the graph is $\beta$-homogeneous.

\begin{lemma}\label{lem:homogeneous-char-U}
The set $\mathcal{U}_\beta$ is nontrivial if and only if $G$ is $\beta$-homogeneous.
\end{lemma}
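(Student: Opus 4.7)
The plan is to observe that both sides of the equivalence are essentially the same condition, with the $\beta$-homogeneity condition being the ``normalized'' version of membership in $\mathcal{U}_\beta$. The cone $\mathcal{U}_\beta$ is defined by a homogeneous linear system, so nontriviality just amounts to the existence of a nonzero solution, and any such solution can be rescaled to a pmf.

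For the ``if'' direction, I would start with a pmf $\mu\in\mathcal{P}(\Gamma)$ witnessing $\beta$-homogeneity, so that $\sum_{\gamma}\mathcal{N}(\gamma,e)\mu(\gamma) = (|V|-1)\beta(e)$ for every $e\in E$. Since $\mu(\Gamma)=1$, this is exactly the defining equality of $\mathcal{U}_\beta$ with $u=\mu$. Hence $\mu\in\mathcal{U}_\beta$, and because $\mu$ is a pmf it is nonzero, giving nontriviality.

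For the ``only if'' direction, I would take any nonzero $u\in\mathcal{U}_\beta$. Since $u\ge 0$ componentwise and $u\ne 0$, we have $u(\Gamma)=\sum_{\gamma}u(\gamma)>0$. Define $\mu := u/u(\Gamma)\in\mathcal{P}(\Gamma)$. Dividing the cone equality by $u(\Gamma)$ yields
\begin{equation*}
\sum_{\gamma\in\Gamma}\mathcal{N}(\gamma,e)\mu(\gamma) = (|V|-1)\beta(e)\quad\text{for all }e\in E,
\end{equation*}
which is precisely $\eta_\mu(e)/(|V|-1)=\beta(e)$, so $G$ is $\beta$-homogeneous.

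There is no real obstacle here; the only point requiring a brief remark is that a nonzero nonnegative vector on $\Gamma$ has strictly positive total mass, so the normalization $\mu=u/u(\Gamma)$ is well defined. Everything else is a direct reading of the definitions of $\mathcal{U}_\beta$ in~\eqref{eq:cone-ubeta} and of $\beta$-homogeneity in Definition~\ref{def:homogeneous}.
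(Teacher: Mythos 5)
Your proof is correct and follows essentially the same route as the paper: the forward direction reads the homogeneity pmf directly as an element of $\mathcal{U}_\beta$, and the converse rescales a nonzero $u\in\mathcal{U}_\beta$ to a pmf in $\mathcal{U}_\beta\cap\mathcal{P}(\Gamma)$ whose edge usage probabilities satisfy~\eqref{eq:beta-eta}. Your added remark that $u(\Gamma)>0$ for nonzero nonnegative $u$ is a harmless explicit justification of the rescaling step the paper leaves implicit.
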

\begin{proof}
If $G$ is $\beta$-homogeneous, then any pmf $\mu$ satisfying Definition~\ref{def:homogeneous} is in $\mathcal{U}_\beta\setminus\{0\}$.  Conversely, if there exists a nonzero $u\in\mathcal{U}_\beta$, then, by rescaling, we can find $\mu\in\mathcal{U}_\beta\cap\mathcal{P}(\Gamma)$.  By the definition of $\cU_\be$, the corresponding edge usage probability vector $\eta_\mu$ together with $\beta$ satisfies~\eqref{eq:beta-eta}, showing that $G$ is $\beta$-homogeneous.
\end{proof}

The cone $\mathcal{U}_\beta$ allows us to express the Lagrangian dual objective function more succinctly.

\begin{lemma}\label{lem:dual-rewritten}
The dual problem to~\eqref{eq:min-det-equiv} is equal to
\begin{equation*}
\begin{split}
\underset{u\in\mathcal{U}_\beta}{\mathrm{maximize}}\quad&\tilde\phi(u) := \sum_{\gamma\in\Gamma}u(\gamma) - \sum_{\gamma\in\Gamma}u(\gamma)\log u(\gamma),
\end{split}
\end{equation*}
where we interpret $u(\gamma)\log u(\gamma)=0$ if $u(\gamma)=0$.
\end{lemma}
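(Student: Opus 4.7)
The plan is to directly evaluate $\phi(u,t) = \inf_{r,\tau}\cL(r,\tau,u,t)$ using the Lagrangian already assembled above the lemma, by minimizing separately over $r$ and over $\tau$. The result then follows by recognizing the feasibility restrictions that arise from the $r$-minimization as exactly the definition of $\cU_\beta$, and the value of the $\tau$-minimization as the entropy-like expression $\tilde\phi$.

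First I would minimize over $r\in\mathbb{R}^E$. Since $r$ enters the Lagrangian only through the linear term
\[
\sum_{e\in E}r(e)\left(\sum_{\gamma\in\Gamma}\mathcal{N}(\gamma,e)u(\gamma) - t\beta(e)\right),
\]
the infimum over $r$ is $-\infty$ unless every coefficient vanishes, i.e.\ $\sum_{\gamma}\mathcal{N}(\gamma,e)u(\gamma) = t\beta(e)$ for all $e\in E$. Summing this identity over $e$, using $\sum_e \mathcal{N}(\gamma,e)=|V|-1$ and $\beta(E)=1$, forces $t=(|V|-1)u(\Gamma)$, so the pair $(u,t)$ where $\phi(u,t)$ is finite satisfies precisely the defining conditions of $\cU_\beta$ from~\eqref{eq:cone-ubeta}. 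Hence the dual problem reduces to maximizing $\phi(u,(|V|-1)u(\Gamma))$ over $u\in\cU_\beta$.

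Next I would minimize over $\tau\in\mathbb{R}^\Gamma$. After the $r$-terms have been removed, we are left with
\[
\sum_{\gamma\in\Gamma}\bigl(\exp\{\tau(\gamma)\} - u(\gamma)\tau(\gamma)\bigr),
\]
which decouples across $\gamma$. For each $\gamma$ with $u(\gamma)>0$, elementary calculus gives the unique minimizer $\tau(\gamma)=\log u(\gamma)$ with minimum value $u(\gamma)-u(\gamma)\log u(\gamma)$. For each $\gamma$ with $u(\gamma)=0$, the term is $\exp\{\tau(\gamma)\}$, whose infimum as $\tau(\gamma)\to-\infty$ is $0$; this agrees with the convention $0\log 0 = 0$, which is why the lemma stipulates it. Summing these per-$\gamma$ infima produces $\tilde\phi(u)$.

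Combining the two steps gives $\phi(u,t) = \tilde\phi(u)$ when $u\in\cU_\beta$ and $t=(|V|-1)u(\Gamma)$, and $\phi(u,t) = -\infty$ otherwise, so taking the supremum yields the claimed reformulation. The only minor subtlety, and the one place to be careful, is the $u(\gamma)=0$ case: the infimum in $\tau(\gamma)$ is not attained, but its value is still $0$, consistent with the convention; this is what allows us to write a single tidy formula for $\tilde\phi$ valid on all of $\cU_\beta$ rather than only on $\cU_\beta\cap\cP_+(\Gamma)$. No new technology beyond the Lagrangian already computed and the definition~\eqref{eq:cone-ubeta} is needed.
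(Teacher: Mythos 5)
Your proposal is correct and follows essentially the same route as the paper: eliminate the linear $r$-terms (which forces $u\in\mathcal{U}_\beta$ and pins down $t$, as in Lemma~\ref{lem:u-star-beta}), then minimize over $\tau$ coordinate-wise, sending $\tau(\gamma)\to-\infty$ where $u(\gamma)=0$ and taking $\tau(\gamma)=\log u(\gamma)$ where $u(\gamma)>0$. The only difference is that you re-derive the content of Lemma~\ref{lem:u-star-beta} inline rather than citing it, which is harmless.
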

\begin{proof}
The fact that we can restrict $u$ to $\mathcal{U}_\beta$ is implied by Lemma~\ref{lem:u-star-beta}.  For $u\in\mathcal{U}_\beta$, we can choose a $t$ to eliminate the coefficients of $r(e)$ in the Lagrangian, so that
\begin{equation*}
\mathcal{L}(r,\tau,u,t) = \sum_{\gamma\in\Gamma}\exp\{\tau(\gamma)\} - \sum_{\gamma\in\Gamma}u(\gamma)\tau(\gamma).
\end{equation*}
In order to evaluate the dual objective function $\phi$ on $(u,t)$, we need to minimize this expression over $\tau$.  This minimization can be done one coordinate at a time.  In the directions $\gamma$ where $u(\gamma)=0$, $\tau(\gamma)$ will be sent to $-\infty$.  For $\gamma$ where $u$ is positive, $\tau(\gamma)$ will attain the minimum at $\tau(\gamma)=\log u(\gamma)$, yielding the lemma.
\end{proof}

The support of $u\in\mathcal{U}_\beta$ plays an important role in the interpretation of the dual, suggesting the following definition.

\begin{definition}\label{def:be-fair-trees}
Given $\beta\in\cP_+(E)$, the set of \emph{$\beta$-fair trees} is defined as the union
\begin{equation*}
\Gamma_{\beta} := \bigcup_{u\in \mathcal{U}_\beta}\supp u.
\end{equation*}
In words, $\gamma$ is in $\Gamma_\beta$ if and only if there is some $u\in\mathcal{U}_\beta$ such that $u(\gamma)>0$.
\end{definition}

We now show that optimizers of $\phi$ have supports as large as possible.

\begin{lemma}
If $u^*\ne 0$ maximizes $\tilde\phi$, then $\supp u^*=\Gamma_\beta$.  In other words, an optimal choice of $u^*$ must be positive on all $\beta$-fair trees.
\end{lemma}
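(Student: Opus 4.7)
The plan is to argue by contradiction. Suppose $u^* \ne 0$ maximizes $\tilde\phi$ but $\supp u^* \subsetneq \Gamma_\beta$. Then there exists a $\beta$-fair tree $\gamma_0$ with $u^*(\gamma_0) = 0$, and by Definition~\ref{def:be-fair-trees} there is some $v \in \mathcal{U}_\beta$ with $v(\gamma_0) > 0$. Since $\mathcal{U}_\beta$ is a convex cone, the perturbed vector $u_\epsilon := u^* + \epsilon v$ lies in $\mathcal{U}_\beta$ for every $\epsilon \geq 0$, and so provides an admissible direction in which to test the optimality of $u^*$.

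Next, I would examine $\tilde\phi(u_\epsilon) - \tilde\phi(u^*)$ as $\epsilon \to 0^+$, splitting the sum over $\gamma$ into three disjoint classes. For $\gamma$ with $u^*(\gamma) > 0$, the map $\epsilon \mapsto (u^*(\gamma)+\epsilon v(\gamma)) - (u^*(\gamma)+\epsilon v(\gamma))\log(u^*(\gamma)+\epsilon v(\gamma))$ is smooth near $\epsilon = 0$ and contributes a bounded $O(\epsilon)$ term. For $\gamma$ with $u^*(\gamma) = 0 = v(\gamma)$, the contribution is identically zero. Finally, for each $\gamma$ with $u^*(\gamma) = 0$ and $v(\gamma) > 0$ (which includes $\gamma_0$), the contribution is exactly
\begin{equation*}
\epsilon v(\gamma) - \epsilon v(\gamma)\log\bigl(\epsilon v(\gamma)\bigr) = \epsilon v(\gamma)\Bigl(1 + \log\tfrac{1}{\epsilon v(\gamma)}\Bigr),
\end{equation*}
which behaves like $-\epsilon v(\gamma)\log\epsilon$ as $\epsilon \to 0^+$.

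The decisive observation is that $-\epsilon\log\epsilon$ dominates $\epsilon$ as $\epsilon \to 0^+$, so the positive contribution from class (iii) strictly outweighs the (possibly negative) contribution from class (i) for all sufficiently small $\epsilon > 0$. Consequently $\tilde\phi(u_\epsilon) > \tilde\phi(u^*)$, contradicting the optimality of $u^*$. Hence no such $\gamma_0$ exists, and $\supp u^* = \Gamma_\beta$.

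I do not anticipate any serious obstacle: this is the standard entropy-maximization phenomenon that the infinite slope of $-x\log x$ at $x = 0$ forces any optimizer to be positive on every feasible coordinate. The only care required is to keep track of the asymptotics uniformly across finitely many trees and to verify that the cone property of $\mathcal{U}_\beta$ indeed makes $u_\epsilon$ feasible for all $\epsilon \geq 0$, both of which are routine.
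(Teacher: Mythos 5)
Your proposal is correct and follows essentially the same argument as the paper: perturb along $u_\epsilon = u^* + \epsilon v$ for a cone element $v$ charging a tree outside $\supp u^*$, and use the fact that the $-\epsilon\log\epsilon$ gain on the newly charged trees dominates the $O(\epsilon)$ change on $\supp u^*$. No issues.
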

\begin{proof}
By contradiction, assume that there exists $u\in\mathcal{U}_\beta$ such that $\Gamma' := \supp u\setminus\supp u^* \ne \emptyset$.  For $\epsilon > 0$, define
$u_\epsilon = u^* + \epsilon u$.  Then $u_\epsilon\in\mathcal{U}_\beta$.  For any $\gamma\in\supp u^*$, we have
\begin{equation*}
u_\epsilon(\gamma)\log u_\epsilon(\gamma) = u^*(\gamma)\log u^*(\gamma) + O(\epsilon).
\end{equation*}
If $\gamma\in \Gamma'$, however, we have
\begin{equation*}
u_\epsilon(\gamma)\log u_\epsilon(\gamma) = u(\gamma)\epsilon\log\epsilon + O(\epsilon).
\end{equation*}
Substituting shows that
\begin{equation*}
\tilde\phi(u_\epsilon) = \tilde\phi(u^*) - (\epsilon\log\epsilon)\sum_{\gamma\in\Gamma'}u(\gamma) + O(\epsilon).
\end{equation*}
As $\epsilon\to 0$, this contradicts the optimality of $u^*$.
\end{proof}

The dual problem has an interesting interpretation in terms of entropy.  For a pmf $\mu\in\mathcal{P}(\Gamma)$, define the Shannon entropy (or information theoretic entropy) $H(\mu)$ of $\mu$ as in (\ref{eq:shannon-entropy}).
Assuming the weights $\beta$ are chosen so that $\mathcal{U}_\beta$ is nontrivial, we can define the {\it $\be$-induced maximum entropy problem} as
\begin{equation}\label{eq:max-entropy}
\underset{\mu\in\mathcal{P}(\Gamma)\cap\mathcal{U}_\beta}{\text{maximize}}\quad H(\mu).
\end{equation}
In particular, we get the following interpretation of the minimum determinant problem (\ref{eq:min-det}).
\begin{lemma}\label{lem:entropy-duality}
If $\mathcal{U}_\beta$ is nontrivial, then  the infimum of the determinant in~\eqref{eq:min-det}  is equal to $\exp(H(\mu^*))$, where $\mu^*$ is the unique maximum-entropy pmf in $\mathcal{U}_\beta$. Otherwise, if $\cU_\be$ is trivial, the infimum is zero.  
\end{lemma}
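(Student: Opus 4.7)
The plan is to combine Lemmas~\ref{lem:slater-applied} and~\ref{lem:dual-rewritten} to identify the primal infimum in~\eqref{eq:min-det} with $\max_{u\in\cU_\be}\tilde\phi(u)$, and then reparameterize the maximization in terms of probability mass functions so that the optimal value can be rewritten via Shannon entropy.

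If $\cU_\be=\{0\}$, the dual is feasible only at $u=0$, where $\tilde\phi(0)=0$ under the convention $0\log 0 = 0$; strong duality then yields infimum equal to $0$, handling the ``otherwise'' case. Suppose instead that $\cU_\be$ is nontrivial. Since $\cU_\be$ is a cone, any nonzero $u\in\cU_\be$ factors as $u=c\mu$ with $c:=u(\Ga)>0$ and $\mu:=u/c\in\cP(\Ga)\cap\cU_\be$. Substituting into the formula from Lemma~\ref{lem:dual-rewritten} gives
\begin{equation*}
\tilde\phi(c\mu)=c - c\log c + c\,H(\mu).
\end{equation*}
Optimizing in $c>0$ with $\mu$ fixed, the first-order condition $\log c = H(\mu)$ identifies the maximizer $c=\exp(H(\mu))$, at which $\tilde\phi(c\mu)=\exp(H(\mu))$; the second derivative $-1/c$ confirms this is indeed a maximum, and since $H\ge 0$ on the simplex, this value dominates $\tilde\phi(0)=0$.

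Therefore $\max_{u\in\cU_\be}\tilde\phi(u) = \exp\bigl(\max_{\mu\in\cP(\Ga)\cap\cU_\be}H(\mu)\bigr) = \exp(H(\mu^*))$, which by strong duality equals the infimum in~\eqref{eq:min-det}. Existence of $\mu^*$ follows because $\cP(\Ga)\cap\cU_\be$ is a nonempty (by nontriviality of the cone) closed subset of the simplex, hence compact, and $H$ is continuous; uniqueness follows from strict concavity of $H$ on the simplex combined with convexity of the feasible set. The main obstacle, if any, is confirming the one-parameter reduction in the scale $c$ that converts the cone optimization into a pmf-based optimization; the remainder is direct bookkeeping on the results already in hand.
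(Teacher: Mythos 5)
Your argument is correct and follows essentially the same route as the paper: decompose any nonzero $u\in\cU_\be$ as $c\mu$ with $\mu\in\cP(\Ga)\cap\cU_\be$, optimize $\tilde\phi(c\mu)=c-c\log c+cH(\mu)$ in $c$ to get the value $\exp(H(\mu))$, and invoke strong duality from Lemma~\ref{lem:slater-applied}; the trivial case is handled identically. The only (harmless) divergence is in the uniqueness of $\mu^*$: you use strict concavity of $H$ on the convex set $\cP(\Ga)\cap\cU_\be$ directly, whereas the paper first notes that $\supp\mu^*=\Ga_\be$ before appealing to strict convexity --- both are valid.
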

\begin{proof}
Any nonzero $u\in\mathcal{U}_\beta$ can be decomposed into a positive real number $\alpha$ multiplied by a pmf $\mu\in\mathcal{P}(\Gamma)\cap\mathcal{U}_\beta$.  For such a $u$,
\begin{equation*}
\tilde\phi(u) = \tilde\phi(\alpha\mu) = \alpha + \alpha H(\mu) - \alpha\log\alpha.
\end{equation*}
If we maximize in $\alpha$ first, we see that $\alpha = \exp(H(\mu))$, so that
\begin{equation*}
\sup_{\alpha>0,\mu\in\mathcal{P}(\Gamma)\cap\mathcal{U}_\beta}\tilde\phi(\alpha\mu) = 
\sup_{\mu\in\mathcal{P}(\Gamma)\cap\mathcal{U}_\beta}\tilde\phi(\mu\exp(H(\mu)))
= \sup_{\mu\in\mathcal{P}(\Gamma)\cap\mathcal{U}_\beta}\exp(H(\mu)) > 0.
\end{equation*}
So, if $\mathcal{U}_\beta$ is nontrivial, then maximizing $\tilde\phi$ is equivalent to the maximum entropy problem.  The fact that $\mu^*$ is unique is a consequence of the fact that, as with $u^*$, $\supp\mu^*=\Gamma_{\beta}$.  Strict convexity then shows that there is only one entropy maximizing pmf in $\mathcal{U}_\beta$.

On the other hand, if $\mathcal{U}_\beta$ is trivial, then the maximum of $\tilde\phi$ is attained at $u^*=0$.  And, by Lemma~\ref{lem:slater-applied}, the infimum of the determinant is $\tilde\phi(0)=0$.
\end{proof}
We end this section by showing that any graph $G$ can be reweighed so as to become $\be$-homogeneous.
\begin{theorem}\label{thm:good-beta}
Let $G=(V,E)$ be a graph and let $\Ga$ be the family of spanning trees of $G$. Assume that $\mu\in\cP(\Ga)$ has the property that for every $e\in E$, there is $\ga\in\Ga$, with $\mu(\ga)>0$, so that $e\in\ga$.
Then, there is $\beta\in\cP_+(E)$ so that $\mu\in \mathcal{U}_\beta$. 

In particular, any graph $G$ is $\beta$-homogeneous, if we let
$\beta(e)$ be proportional to $\cReff(e)$,
where $\cReff(e)$ is the effective resistance of the edge $e$.
\end{theorem}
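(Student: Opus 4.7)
The plan is to construct $\beta$ from $\mu$ directly, essentially by reading off what the defining equation \eqref{eq:cone-ubeta} of $\cU_\beta$ requires. Since $\mu\in\cP(\Gamma)$ gives $\mu(\Gamma)=1$, membership $\mu\in\cU_\beta$ is equivalent to
\[
\sum_{\gamma\in\Gamma}\mathcal{N}(\gamma,e)\mu(\gamma)=(|V|-1)\beta(e)\qquad\text{for all }e\in E,
\]
which forces the definition
\[
\beta(e) := \frac{\eta_\mu(e)}{|V|-1} = \frac{1}{|V|-1}\sum_{\gamma\in\Gamma}\mathcal{N}(\gamma,e)\mu(\gamma).
\]
So the candidate $\beta$ is uniquely determined.

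First I would verify that this $\beta$ lies in $\cP_+(E)$. Positivity is exactly the hypothesis: for each $e\in E$, there exists some $\gamma\in\Gamma$ with $e\in\gamma$ and $\mu(\gamma)>0$, so the defining sum for $\beta(e)$ is strictly positive. For $\beta(E)=1$, I would swap the order of summation and use the fact that every spanning tree has exactly $|V|-1$ edges:
\[
\sum_{e\in E}\beta(e) \;=\; \frac{1}{|V|-1}\sum_{\gamma\in\Gamma}\mu(\gamma)\sum_{e\in E}\mathcal{N}(\gamma,e) \;=\; \frac{1}{|V|-1}\sum_{\gamma\in\Gamma}\mu(\gamma)\cdot(|V|-1) \;=\; 1.
\]
Then $\mu\in\cU_\beta$ holds by the very construction of $\beta$. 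This proves the first assertion.

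For the second assertion, I would apply the first part to the UST measure $\mu_{\one}$ corresponding to the constant weight $\sigma\equiv\one$. The hypothesis is met: since $G$ is connected and has no self-loops, every edge $e$ appears in at least one spanning tree, so $\mu_{\one}(\gamma)>0$ for some $\gamma\ni e$. Applying the construction above and invoking Kirchhoff's formula~\eqref{eq:peredgeeffres} with $\sigma\equiv\one$ gives
\[
\beta(e) \;=\; \frac{\eta_{\one}(e)}{|V|-1} \;=\; \frac{\cReff(e)}{|V|-1},
\]
which is proportional to $\cReff(e)$, and $\mu_{\one}\in\cU_\beta$ shows that $G$ is $\beta$-homogeneous in the sense of Definition~\ref{def:homogeneous}.

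There is no real obstacle here: the argument is essentially a definition-chase, with the two nontrivial ingredients being (i) the identity $\sum_{e\in\gamma}1=|V|-1$ for spanning trees, which normalizes $\beta$, and (ii) Kirchhoff's identity $\eta_\sigma(e)=\sigma(e)\cReff_\sigma(e)$, which identifies the resulting weights with the effective resistances.
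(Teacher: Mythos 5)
Your proposal is correct and follows essentially the same route as the paper: define $\beta(e):=\eta_\mu(e)/(|V|-1)$, get positivity from the hypothesis, normalize using $\sum_{e\in E}\eta_\mu(e)=|V|-1$, and for the second assertion apply this to the uniform spanning tree distribution together with Kirchhoff's identity~\eqref{eq:peredgeeffres} in the unweighted case. No gaps.
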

\begin{proof}
We want to show that if $\mu$ is as above, then $\mu$ is in $\cU_\be$ as defined in (\ref{eq:cone-ubeta}), for some $\be\in\cP_+(E)$. Note that $\mu\in\R_{\ge 0}^\Ga$ and $\mu(\Ga)=1$. Also, 
\[\sum_{\ga\in\Ga}\cN(\ga,e)\mu(\ga)=\eta_\mu(e)=\bP_\mu\left(e\in\underline{\ga}\right).\]
Next, the property that every $e\in E$ belongs to some tree $\ga\in\Ga$ with $\mu(\ga)>0$, implies that $\eta_\mu(e)>0$, for every $e\in E$.

Finally, recall that for any $\mu\in\cP(\Ga),$ we have
\begin{equation}\label{eq:edge-probabilities-sum}
\sum_{e\in E} \eta_\mu(e)= |V|-1.
\end{equation}
Therefore, we can set 
\[
\beta(e):=\frac{\eta_\mu(e)}{|V|-1}\qquad\forall e\in E.
\]
And, this defines a pmf $\be\in\cP_+(E)$ so that $\mu\in\cU_\be$.

In particular, suppose $\mu_0\in\cP(\Ga)$ is the uniform distribution. Then, $\mu_0$ is supported on every tree in $\Ga$. Moreover, any edge $e\in E$ belongs to at least one tree in $\Ga$, as can be seen by performing a greedy algorithm that adds edges as long as they don't form cycles. This implies that $\mu_0$ has the property that $\eta_{\mu_0}(e)>0$ for every $e\in E$. Finally, Kirchhoff's theorem (\ref{eq:peredgeeffres}) applied to the unweighted case, shows that
$\eta_{\mu_0}(e)=\cReff(e)$, for all $e\in E$.

\end{proof}

\section{Minimizing sequences}\label{sec:minimizing-sequences}

Now we turn our attention to minimizing sequences for the determinant.  If $G$ is not $\beta$-homogeneous, we know by Lemma \ref{lem:homogeneous-char-U} and Lemma~\ref{lem:entropy-duality} that we can make the determinant arbitrarily close to zero.  This is essentially a signal that the $\beta$ weights are not appropriate for the structure of the graph.  If $\beta$ is chosen appropriately, though, we can study the limiting behavior of minimizing sequences.  To do this, we shall use the Kullback--Leibler divergence.  We recall the relevant definitions for probability vectors (i.e., discrete distributions on finite sets).

\begin{definition}
Let $p,q\in\mathbb{R}^n$ be two probability vectors.  The \emph{Kullback--Leibler divergence} between $p$ and $q$ is defined as
\begin{align*}
D_{\KL}(p\|q) & := -\sum_{i=1}^np_i\log\frac{q_i}{p_i} = - \sum_{i=1}^np_i\log q_i + \sum_{i=1}^np_i\log p_i \\
&= - \sum_{i=1}^np_i\log q_i - H(p).
\end{align*}
The first term on the right hand-side is called the \emph{cross entropy} between $p$ and $q$ and is denoted $H(p,q)$.
\end{definition}

The Kullback--Leibler divergence is a convenient measure to use with $\UST_\si$'s due to the following lemma.

\begin{lemma}\label{lem:WUST-KL}
Let $\mu\in\mathcal{P}(\Gamma)$ with associated edge usage probability vector $\eta_\mu$, and let $\mu_\sigma$ be the pmf for a $\UST_\si$ associated with weights $\sigma$.  Then
\begin{equation*}
H(\mu,\mu_\sigma) = \log\sum_{\gamma\in\Gamma}\sigma[\gamma] - \sum_{e\in E}\eta_\mu(e)\log\sigma(e)
\end{equation*}
\end{lemma}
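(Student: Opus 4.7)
The plan is to expand the cross entropy $H(\mu,\mu_\sigma)=-\sum_{\gamma\in\Gamma}\mu(\gamma)\log\mu_\sigma(\gamma)$ directly from the definition of $\mu_\sigma$ and then interchange the order of summation to bring in the edge usage probabilities.

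First I would recall that by definition of the $\UST_\sigma$ (Section~\ref{ssec:wust}) together with Kirchhoff's Matrix Tree Theorem,
\begin{equation*}
\mu_\sigma(\gamma)=\frac{\sigma[\gamma]}{\sum_{\gamma'\in\Gamma}\sigma[\gamma']},
\end{equation*}
so taking logarithms gives $\log\mu_\sigma(\gamma)=\log\sigma[\gamma]-\log\sum_{\gamma'\in\Gamma}\sigma[\gamma']$. Substituting into the cross entropy and using $\sum_\gamma\mu(\gamma)=1$ to pull out the normalization constant yields
\begin{equation*}
H(\mu,\mu_\sigma)=\log\sum_{\gamma'\in\Gamma}\sigma[\gamma']-\sum_{\gamma\in\Gamma}\mu(\gamma)\log\sigma[\gamma].
\end{equation*}

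Next I would use the identity $\log\sigma[\gamma]=\sum_{e\in E}\mathcal{N}(\gamma,e)\log\sigma(e)$, which is just the logarithm of $\sigma[\gamma]=\prod_{e\in E}\sigma(e)^{\mathcal{N}(\gamma,e)}$, and substitute this into the second term. Swapping the order of the double sum over $\gamma$ and $e$ and recognizing $\sum_{\gamma\in\Gamma}\mathcal{N}(\gamma,e)\mu(\gamma)=\eta_\mu(e)$ from the definition of the edge usage probability vector then gives the claimed formula.

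The computation is entirely routine: there is no real obstacle, only the bookkeeping of pulling the logarithm inside and then swapping the order of summation to convert $\sum_\gamma\mu(\gamma)\mathcal{N}(\gamma,e)$ into $\eta_\mu(e)$.
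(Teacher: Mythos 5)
Your proposal is correct and follows essentially the same computation as the paper: substitute the definition $\mu_\sigma(\gamma)=\sigma[\gamma]/\sum_{\gamma'}\sigma[\gamma']$ into the cross entropy, expand $\log\sigma[\gamma]$ as $\sum_{e}\mathcal{N}(\gamma,e)\log\sigma(e)$, and exchange the order of summation to recognize $\eta_\mu(e)$. (The invocation of Kirchhoff's theorem is unnecessary for identifying the normalizing constant, but it is harmless.)
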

\begin{proof}
Indeed, if we exchange the order of summation,
\begin{equation*}
\begin{split}
H(\mu,\mu_\sigma) &= -\sum_{\gamma\in\Gamma}\mu(\gamma)\log\mu_\sigma(\gamma)
= -\sum_{\gamma\in\Gamma}\mu(\gamma)\log\left(\frac{\prod\limits_{e\in E}\sigma(e)^{\mathcal{N}(\gamma,e)}}{\sum\limits_{\gamma'\in\Gamma}\sigma[\gamma']}\right)\\
&= \log\sum_{\gamma\in\Gamma}\sigma[\gamma] - \sum_{\gamma\in\Gamma}\mu(\gamma)\sum_{e\in E}\mathcal{N}(\gamma,e)\log\sigma(e) \\
&= \log\sum_{\gamma\in\Gamma}\sigma[\gamma] - \sum_{e\in E}\left(\sum_{\gamma\in\Gamma}\mu(\gamma)\mathcal{N}(\gamma,e)\right)\log\sigma(e) ,
\end{split}
\end{equation*}
which yields the lemma.
\end{proof}

\begin{lemma}\label{lem:convergence-to-mu-star}
Suppose $G$ is $\beta$-homogeneous and let $\mu^*$ be the unique, maximum entropy pmf in $\mathcal{U}_\beta$ from Lemma~\ref{lem:entropy-duality}.  Let $\{\sigma_k\}$ be a minimizing sequence for the infimum in~\eqref{eq:min-det}, and let $\mu_{\sigma_k}$ be the corresponding $\UST_\si$ pmfs.  Then $\mu_{\sigma_k}\to\mu^*$, as $k\rightarrow\infty$.
\end{lemma}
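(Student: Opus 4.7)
The plan is to exploit the identity for cross entropy given by Lemma~\ref{lem:WUST-KL} and show that, under the minimizing sequence assumption, the Kullback--Leibler divergence $D_{\KL}(\mu^* \| \mu_{\sigma_k})$ tends to zero. Since $\Gamma$ is finite, this is enough to force convergence in every norm on $\mathbb{R}^\Gamma$.

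First I would apply Lemma~\ref{lem:WUST-KL} with $\mu = \mu^*$ and $\sigma = \sigma_k$ to obtain
\begin{equation*}
H(\mu^*,\mu_{\sigma_k}) \;=\; \log\sum_{\gamma\in\Gamma}\sigma_k[\gamma] \;-\; \sum_{e\in E}\eta_{\mu^*}(e)\log\sigma_k(e).
\end{equation*}
Because $\mu^* \in \mathcal{U}_\beta$, by the defining condition of the cone we have $\eta_{\mu^*}(e) = (|V|-1)\beta(e)$ for every $e\in E$. Meanwhile, the feasibility of $\sigma_k$ in~\eqref{eq:min-det} means $\prod_{e} \sigma_k(e)^{\beta(e)} = 1$, which after taking logarithms gives $\sum_{e} \beta(e)\log\sigma_k(e) = 0$. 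These two observations combine to kill the second sum entirely, leaving
\begin{equation*}
H(\mu^*,\mu_{\sigma_k}) \;=\; \log\sum_{\gamma\in\Gamma}\sigma_k[\gamma].
\end{equation*}

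Next I would invoke that $\{\sigma_k\}$ is a minimizing sequence, so $\sum_\gamma \sigma_k[\gamma] \to p^*$. By Lemma~\ref{lem:entropy-duality}, $\log p^* = H(\mu^*)$. Therefore $H(\mu^*,\mu_{\sigma_k}) \to H(\mu^*)$, which by the very definition of KL divergence gives
\begin{equation*}
D_{\KL}(\mu^* \| \mu_{\sigma_k}) \;=\; H(\mu^*,\mu_{\sigma_k}) - H(\mu^*) \;\longrightarrow\; 0.
\end{equation*}
Note that for each $k$ the quantity $\mu_{\sigma_k}(\gamma)$ is positive on every spanning tree (since $\sigma_k>0$), so the KL divergence is well-defined and finite.

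Finally, I would conclude by Pinsker's inequality, which yields $\|\mu^* - \mu_{\sigma_k}\|_{TV} \le \sqrt{D_{\KL}(\mu^*\|\mu_{\sigma_k})/2} \to 0$. Since $\Gamma$ is finite, total variation convergence coincides with coordinate-wise convergence in $\mathbb{R}^\Gamma$, proving $\mu_{\sigma_k}\to\mu^*$. The only step requiring care is the verification that the second term of the cross-entropy identity vanishes; everything else is a routine consequence of duality combined with the homogeneity of $\mu^*$.
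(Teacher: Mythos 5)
Your proof is correct and follows essentially the same route as the paper: both apply Lemma~\ref{lem:WUST-KL} with $\mu=\mu^*$, identify the cross entropy $H(\mu^*,\mu_{\sigma_k})$ with $\log\sum_{\gamma\in\Gamma}\sigma_k[\gamma]$ (you via the normalization constraint, the paper via the degree-zero homogeneous function $g$), and then use the duality value $H(\mu^*)$ from Lemma~\ref{lem:entropy-duality} to force $D_{\KL}(\mu^*\|\mu_{\sigma_k})\to 0$ and hence coordinate-wise convergence. Your explicit appeal to Pinsker's inequality is just a slightly more detailed justification of the paper's closing remark that KL convergence implies pointwise convergence for finite probability vectors.
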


\begin{proof}
Since $\{\sigma_k\}$ is a minimizing sequence for~\eqref{eq:min-det}, it is also a minimizing sequence for $g$ in~\eqref{eq:g-sigma}.  Using Lemma~\ref{lem:WUST-KL} and the fact that $\mu^*\in\mathcal{U}_\beta$, we see that
\begin{equation*}
H(\mu^*,\mu_{\sigma_k}) = \log\sum_{\gamma\in\Gamma}\sigma_k[\gamma] - (|V|-1)\sum_{e\in E}\beta(e)\log\sigma_k(e).
\end{equation*}
Comparing to~\eqref{eq:g-sigma}, we see that
\begin{equation}\label{eq:gsigmak}
g(\sigma_k) = H(\mu^*,\mu_{\sigma_k}) = H(\mu^*) + D_{\KL}(\mu^*\|\mu_{\sigma_k}).
\end{equation}
Since $g(\sigma_k)\to H(\mu^*)$ by Lemma~\ref{lem:entropy-duality}, it follows that $D_{\KL}(\mu^*\|\mu_{\sigma_k})\to 0$.  For finite probability vectors, this implies pointwise convergence.
\end{proof}

\begin{corollary}\label{cor:minimizer-is-WUST}
The weights $\sigma\in\mathbb{R}^E_{>0}$ minimize the determinant in~\eqref{eq:min-det} if and only if $\mu_{\sigma}=\mu^*$, where $\mu^*$ the maximum entropy pmf in $\mathcal{U}_\beta$.
\end{corollary}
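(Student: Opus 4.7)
The claim follows essentially at once from two ingredients already available in this section: the identity
\[
g(\sigma) = H(\mu^*) + D_{\KL}(\mu^*\|\mu_\sigma),
\]
established mid-proof of Lemma~\ref{lem:convergence-to-mu-star} as equation~\eqref{eq:gsigmak}, and Lemma~\ref{lem:entropy-duality}, which identifies $H(\mu^*)$ with the logarithm of the infimum value in~\eqref{eq:min-det}. My plan is simply to read the corollary off of these two facts in each direction.

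For the forward implication, suppose $\sigma$ is a minimizer of~\eqref{eq:min-det}. Then the constant sequence $\sigma_k \equiv \sigma$ is trivially a minimizing sequence for~\eqref{eq:min-det}, so Lemma~\ref{lem:convergence-to-mu-star} forces $\mu_\sigma = \lim_k \mu_{\sigma_k} = \mu^*$. For the converse, note that the derivation of~\eqref{eq:gsigmak} used only that $\mu^* \in \mathcal{U}_\beta$, and \emph{not} any optimality of $\sigma$; hence the identity above holds for every $\sigma\in\mathbb{R}^E_{>0}$. If $\mu_\sigma = \mu^*$, the KL term vanishes, yielding $g(\sigma) = H(\mu^*) = \log p^*$, where $p^*$ denotes the infimum in~\eqref{eq:min-det}. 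Since $g$ is positively homogeneous of degree zero (as noted at the start of Section~\ref{sec:minimizers}), we may rescale $\sigma$ by a positive constant $c$ to enforce the normalization $\prod_e (c\sigma(e))^{\beta(e)}=1$ without altering either $\mu_\sigma$ or $g(\sigma)$. On the constraint surface, $g$ reduces to $\log\sum_\gamma \sigma[\gamma]$, so the rescaled weights attain the infimum of~\eqref{eq:min-det}.

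There is no substantive obstacle here: the heavy lifting has already been done by Lemma~\ref{lem:entropy-duality} (duality plus uniqueness of $\mu^*$) and by Lemma~\ref{lem:convergence-to-mu-star} (the KL identity and pointwise convergence of minimizing sequences of $\UST_\sigma$ pmfs). The only minor point worth flagging in the write-up is that $\mu_\sigma = \mu^*$ is a scale-invariant condition whereas the constraint in~\eqref{eq:min-det} is not, so the biconditional is most naturally understood up to positive rescaling of $\sigma$; this is harmless since such rescaling leaves both $\mu_\sigma$ and the value of $g$ fixed.
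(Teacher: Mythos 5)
Your proposal is correct and follows essentially the same route as the paper: the forward direction via the constant minimizing sequence and Lemma~\ref{lem:convergence-to-mu-star}, and the converse by reading $g(\sigma)=H(\mu^*)$ off the identity~\eqref{eq:gsigmak}. Your extra remarks on rescaling to meet the constraint and on the scale-invariance of the condition $\mu_\sigma=\mu^*$ merely make explicit a step the paper leaves implicit.
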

\begin{proof}
If $\sigma\in\mathbb{R}^E_{>0}$ minimizes the determinant in~\eqref{eq:min-det}, then the constant sequence $\si_k\equiv\si$ is a minimizing sequence, hence $\mu_\si=\mu^*$, by Lemma \ref{lem:convergence-to-mu-star}.

Conversely, suppose that $\mu_\si=\mu^*$. Then, by (\ref{eq:gsigmak}), $g(\si)=H(\mu^*)$. Hence, $\si$ is a minimizer for~\eqref{eq:min-det}.
\end{proof}

\section{Barriers to the existence of a minimizer}\label{sec:barriers}

As shown in the previous sections, there are two barriers to the existence of a minimizer for the infimum in~\eqref{eq:min-det}.  On one hand, if $G$ is not $\beta$-homogeneous, then the determinant is not bounded away from zero.  However, the determinant is positive for any choice of positive weights $\sigma$, so no minimizer can exist.

When $G$ is $\beta$-homogeneous, however, the existence of a minimizer is more subtle. 
\begin{lemma}\label{lem:defective-mu}
Assume $\be\in\cP_+(\Ga)$ and let $\Ga_\be$ be the family of $\be$-fair trees as in Definition \ref{def:be-fair-trees}. If $G$ is $\beta$-homogeneous and~\eqref{eq:min-det} does not have a minimizer, then $\Gamma_\beta\ne\Gamma$.  In other words, in this case, the maximum entropy pmf $\mu^*$ from Lemma \ref{lem:entropy-duality} must give zero mass to some trees in $\Ga$.
\end{lemma}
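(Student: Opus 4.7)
The plan is to prove the contrapositive: assume $G$ is $\be$-homogeneous and $\Ga_\be = \Ga$, and then produce a minimizer of (\ref{eq:min-det}). The approach is a compactness argument in log-coordinates after quotienting out the gauge freedom in the $\UST_\si$ parameterization.

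First, the maximum entropy pmf $\mu^*$ appearing in Lemma \ref{lem:entropy-duality} satisfies $\supp \mu^* = \Ga_\be$, since $\mu^*$ is a rescaling of the dual optimizer $u^*$ whose support was shown in Section \ref{sec:dual-entropy} to equal $\Ga_\be$. Under the hypothesis $\Ga_\be = \Ga$ this gives $\mu^*(\ga) > 0$ for every $\ga \in \Ga$. For any minimizing sequence $\{\si_k\}$ of (\ref{eq:min-det}), Lemma \ref{lem:convergence-to-mu-star} yields $\mu_{\si_k}(\ga) \to \mu^*(\ga) > 0$ pointwise, and together with $Z_k := \sum_\ga \si_k[\ga] \to \exp(H(\mu^*)) > 0$ this forces every $\si_k[\ga]$ to be bounded above and bounded away from zero. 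In log-coordinates $r_k(e) := \log \si_k(e)$ and $\tau_k(\ga) := \log \si_k[\ga]$, the vector $\tau_k$ is therefore uniformly bounded in $\R^\Ga$.

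Next, I would analyze the linear map $L : \R^E \to \R^\Ga$ defined by $(Lr)(\ga) := \sum_{e \in \ga} r(e)$, which satisfies $\tau_k = L r_k$, together with the hyperplane $\be^\perp := \{r : \sum_e \be(e) r(e) = 0\}$ containing every $r_k$. Let $V_0 := \ker L \cap \be^\perp$; adding any $s \in V_0$ to $r_k$ leaves $\tau_k$, the $\UST_\si$-probabilities, and the constraint unchanged. By construction, the induced map $\bar L : \be^\perp / V_0 \to \R^\Ga$ is injective, and because its domain is finite dimensional it admits a bounded inverse on its image. Boundedness of $\tau_k$ therefore forces the cosets $r_k + V_0$ to be bounded; choosing representatives $\tilde r_k$ in a fixed linear complement of $V_0$ inside $\be^\perp$ yields a bounded sequence in $\R^E$ that still satisfies the constraint and still induces the same values $\si_k[\ga]$. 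Extracting a convergent subsequence $\tilde r_k \to r^*$ and setting $\si^* := \exp(r^*)$ produces a feasible weight at which, by continuity of $\si \mapsto \sum_\ga \si[\ga]$, the infimum in (\ref{eq:min-det}) is attained.

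The main obstacle is the gauge-quotient step: correctly identifying $V_0$, verifying the injectivity of $\bar L$, and confirming that one can select bounded representatives. The hypothesis $\Ga_\be = \Ga$ is essential precisely here, because it is what guarantees lower bounds on every $\si_k[\ga]$ rather than only on the maximum; that uniform two-sided control on the whole vector $\tau_k$ is what lets the finite-dimensional linear-algebra step pin down $r_k$ modulo gauge.
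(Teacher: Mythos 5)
Your proof is correct, but it takes a genuinely different route from the paper's. You argue the contrapositive: assuming $\Ga_\be=\Ga$, you combine $\supp\mu^*=\Ga_\be$ (established in Section~\ref{sec:dual-entropy}) with Lemma~\ref{lem:convergence-to-mu-star} and Lemma~\ref{lem:entropy-duality} to get uniform two-sided bounds on every $\si_k[\ga]$, and then a gauge-fixing/compactness argument in log-coordinates: the induced map from $\be^\perp/(\ker\cN\cap\be^\perp)$ to $\R^\Ga$ is injective by construction, hence has bounded inverse on its image in finite dimensions, so the cosets of $r_k$ are bounded, and representatives in a fixed complement of $V_0$ preserve both the constraint and all values $\si_k[\ga]$; a subsequential limit $\exp(r^*)$ is then a strictly positive feasible minimizer by continuity. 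All of these steps check out, and the "main obstacle" you flag is not really one --- you never need to identify $V_0$ explicitly, only the abstract injectivity on the quotient, which is automatic. The paper instead argues directly: it reduces to biconnected components, handles $|V|=2$ separately, shows a minimizing sequence without a convergent (normalizable) subsequence must split $E$ into a set $E_0$ with $\si_k\to 0$ and a complement bounded below, and then uses a combinatorial edge-swap (Claim~\ref{cl:tree-norestrict}) to exhibit a specific tree $\ga$ with $\mu_{\si_k}(\ga)\to 0$, hence $\mu^*(\ga)=0$. Your version is shorter and avoids both the biconnectivity reduction and the combinatorial claim; what the paper's version buys is structural byproducts --- the set $E_0$ and a component of $G\setminus E_0$ with the $\mu^*$-restriction property --- which are explicitly reused in the proof of Lemma~\ref{lem:sdense-is-shomogeneous}, so adopting your argument there would require recovering that subgraph by other means (e.g., via Lemma~\ref{lem:homog-density}).
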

\begin{proof}
Without loss of generality, we may assume that $G$ is (vertex) biconnected.
Indeed, assume that $G_1,G_2,\ldots,G_p$ are the biconnected components of $G$ and let $\Gamma_i$ be the family of spanning trees on $G_i$ for $i=1,2,\ldots,p$.  The set of spanning trees $\Gamma$ on $G$ is a direct sum of the $\Gamma_i$ in the sense that
\begin{equation}\label{eq:gamma-decomposition}
\Gamma = \left\{\gamma=\bigcup_{i=1}^p\gamma_i : \gamma_i\in\Gamma_i\text{ for }i=1,2,\ldots,p\right\}.
\end{equation}
This induces a decomposition of the determinant:
\begin{equation}\label{eq:mass-decomposition}
\sum_{\gamma\in\Gamma}\sigma[\gamma] = \sum_{\gamma_1\in\Gamma_1}\cdots\sum_{\gamma_p\in\Gamma_p}\prod_{i=1}^p\sigma[\gamma_i]
= \prod_{i=1}^p\left(\sum_{\gamma\in\Gamma_i}\sigma[\gamma]\right).
\end{equation}
Moreover, in light of Remark~\ref{rem:rescale-beta}, $\beta$ can be rescaled to a probability vector on each biconnected component, resulting in minimization problems of the form~\eqref{eq:min-det}.  Thus, in minimizing the determinant, we can minimize over each biconnected component of $G$ independently. In particular, if $G$ is not biconnected, then we can replace $G$ by one of its biconnected components on which no minimizer exists. 

Without loss of generality, we may also may assume that $|V|\ge 3$, because when $|V|=2$, there is always a minimizer for~\eqref{eq:min-det}.
Indeed, in that case, $\Ga\simeq E,$ and $|V|-1=1$. So, if we consider the edge-weights  $\be$, then $\mu_\be$ is a weighted uniform pmf with $\eta_\be=\be.$ In particular, $\mu_\be\in\cU_\be\cap\cP(\Ga)$. Therefore, $G$ is always $\beta$-homogeneous. However, in this case, \eqref{eq:min-det} always has a minimizer. Namely, suppose $\si=c\be$ for some constant $c>0$. Then, the constraint in \eqref{eq:min-det} is satisfied, if $\bar{c}=\prod_{e\in E}\be(e)^{-\be(e)}.$ Write $\bar{\si}=\bar{c}\be.$ With this choice, the determinant is $\sum_{e\in E}\bar{\si}(e)=\bar{c}.$ On the other hand, $\exp(H(\mu_\be))=\bar{c}$ as well, which shows that $\bar{\si}$ is a minimizer.

Now, assume that $\si_k$ is a minimizing sequence. We claim that there is a constant $M>0$, a subsequence, which we still call $\si_k$, and a set $E_0\subset E$ such that $\emptyset\ne E_0\subsetneq E,$ so that $\lim_{k\rightarrow\infty}\si_k(e)=0,$ for all $e\in E_0,$ while $\sigma_k(e)\ge M$ for all $k$ and all $e\in E\setminus E_0$.

To see this, suppose first that there exists a constant $c>0$ such that $c^{-1}\le\sigma_k(e)\le c$ for all $e$ and $k$. Then, we could extract a converging subsequence and, by continuity, the limit of this sequence would be a minimizer.  Thus, since we are assuming that a minimizer does not exist in this case, the minimizing sequence $\si_k$ is not bounded away from zero and infinity. Furthermore, the product constraint in~\eqref{eq:min-det}, implies that for some edges $\sigma_k(e)$ must be getting arbitrarily close to zero, while for other edges  $\sigma_k(e)$ is growing arbitrarily large. 
At first, initialize the set $E_0$ to be empty. 
Next, pick an edge $e_1$ such that $\liminf_{k\rightarrow\infty}\si_k(e_1)=0$ and extract a subsequence so that, after renaming it, we have $\lim_{k\rightarrow\infty}\si_k(e_1)=0,$ and add $e_1$ to the set $E_0$.
If $\si_k$ is not bounded below uniformly on $E\setminus E_0$, we can repeat the procedure and choose $e_2\not\in E_0$, so that after extracting another subsequence  $\lim_{k\rightarrow\infty}\si_k(e_2)=0$. Then, we update the set $E_0$ and set it equal to $\{e_1,e_2\}$. This, process must terminate before  $E_0=E$, otherwise $\si_k\rightarrow 0$ uniformly on $E$, which is impossible.
\begin{claim}\label{cl:tree-norestrict}
When $G$ is biconnected and $\emptyset\ne E_0\subsetneq E$,
there  exists a pair of edges $e_+, e_-$, with $e_+\in E\setminus E_0$ and $e_-\in E_0$, and a pair of spanning trees $\ga, \tilde{\ga}\in\Gamma$ such that $\tilde\ga=(\ga-e_-)+e_+$, meaning that $\tilde{\ga}$ is obtained from $\ga$ by removing $e_-$ and adding $e_+$.  
\end{claim}
Assuming Claim \ref{cl:tree-norestrict}, we can finish the proof of Lemma \ref{lem:defective-mu}. Indeed, since $\mu_{\si_k}(\tilde\ga)\le 1$, $\si_k(e_+)\ge M$, and $\lim_{k\rightarrow\infty}\si_k(e_-)=0$, we get 
\begin{equation*}
\mu_{\sigma_k}(\gamma) = \frac{\sigma_k(e_-)}{\sigma_k(e_+)}\mu_{\sigma_k}(\tilde{\gamma}) \le \frac{\sigma_k(e_-)}{M}\longrightarrow 0,
\end{equation*}
as $k\rightarrow\infty$. However,
by Lemma~\ref{lem:convergence-to-mu-star}, $\mu_{\sigma_k}\to\mu^*$.
Therefore, $\mu^*(\ga)=0.$
\end{proof}
\begin{proof}[Proof of Claim \ref{cl:tree-norestrict}]
Let $A$ be a connected component of the subgraph induced by $E\setminus E_0$. 
First, we show that $A$ can be assumed to be a vertex-induced subgraph of $G$. Indeed, suppose that $e=\{x,y\}$ is an edge, with $x,y\in V_A$, but such that $e\not\in E_A.$  Then, $e\in E_0$ and there is a path $p\subset A$ connecting $x$ and $y$. Let $\ga'$ be a spanning tree of $G$ containing $p$. Since $e$ forms a cycle with $p$, we can swap $e$ with any edge $e'$ in $p$ and obtain a new tree $\ga''$. So, in this case, it is enough to let $\ga:=\ga''$, $\tilde{\ga}:=\ga'$, $e_+:=e'$, and $e_-:=e.$
Hence, we may assume that every component of $G\setminus E_0$ is vertex-induced.

Now, pick an edge $e_1=\{x_1,y_1\}\in \de A$, meaning that $x_1\not\in V_A$ and $y_1\in V_A$. Since $A$ is vertex-induced, $e_1\in E_0$. Also,
since $A$ is connected and edge-induced, so that $|E_A|\ge 1$, it follows that the node $y_1$ cannot be isolated in $A$. So, there exists $y_2\in V_A$ and an edge $e_2:=\{y_1,y_2\}\in E_A.$ In particular, $e_2\in E\setminus E_0.$
Moreover, $y_2\ne x_1$ and the path $p_1$ formed by $e_1$ and $e_2$ connects $x_1$ and $y_2$.
Since $|V|\ge 3$ and $G$ is biconnected, by Menger's Theorem, there is another path $p_2$ connecting $x_1$ and $y_2$ in $G$, that is vertex-independent from $p_1$, except for the end-points. Note that, although $G$ can have multiedges, collapsing every multiedge down to a single edge does not affect the properties of biconnectedness, and  then once we have two vertex-independent paths for the simple graph, we can also obtain such paths for the multigraph.

Let $C$ be the cycle obtained by forming the union $p_1\cup p_2$. Pick a tree $\ga_1\in\Ga$ that contains the broken cycle $C\setminus e_1.$
Then, we may swap $e_1$ and $e_2$, and get a tree $\ga_2$. Therefore, in this case, we set  $\ga:=\ga_2$, $\tilde{\ga}:=\ga_1$, $e_+:=e_2$, and $e_-:=e_1.$
\end{proof}

\begin{definition}\label{def:u-restriction}
Let $u\in\mathbb{R}^{\Gamma}$ with nontrivial support and let $H=(V_H,E_H)\in\S(G)$ (Definition~\ref{def:subgraphs}).  We say that $H$ has the \emph{$u$-restriction property} if every $\gamma\in\supp u$ restricts as a tree on $H$.  In other words, if $|\gamma\cap E_H|=|V_H|-1$ for every $\gamma\in\supp u$.
\end{definition}

The $u$-restriction property is important for the present discussion because of the following lemma.
\begin{lemma}\label{lem:u-restriction}
Let $G$ be biconnected and let $u\in\mathbb{R}^E_{\ge 0}\setminus\{0\}$.  If there exists $H\in\S(G)\setminus\{G\}$ which has the $u$-restriction property, then $\supp u \ne \Gamma$.
\end{lemma}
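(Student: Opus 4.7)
The plan is to exhibit a spanning tree $\gamma^*\in\Gamma$ satisfying $|\gamma^*\cap E_H|\ne |V_H|-1$; by the contrapositive of the $u$-restriction property, such a $\gamma^*$ cannot lie in $\supp u$, and so $\supp u\ne\Gamma$. To produce $\gamma^*$ I will construct two spanning trees of $G$ whose intersections with $E_H$ differ by exactly one, guaranteeing that at least one of them has the wrong count.

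First I would pick edges $f\in E_H$ and $g\in E\setminus E_H$; both exist because $H$ is nontrivial ($|E_H|\ge 1$) and proper ($E_H\subsetneq E$, since $H\ne G$ and $H$ is edge-induced). The key combinatorial input is Whitney's theorem: in a biconnected graph, any two edges lie on a common cycle. Let $C$ be a cycle of $G$ containing both $f$ and $g$. In the multigraph case, if $f$ and $g$ happen to be parallel they already form a $2$-cycle; otherwise I would reduce to the underlying simple graph, whose biconnectedness is preserved by the collapsing argument already used in the proof of Claim~\ref{cl:tree-norestrict}.

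Next I would manipulate spanning trees. Since $C\setminus\{f\}$ is a path, it is a forest in $G$, and I can extend it to a spanning tree $T$ of $G$. By construction $f\notin T$, since otherwise $T\supseteq C$ would contain a cycle. Because $g\in C\setminus\{f\}\subseteq T$, the swap $T':=(T\setminus\{g\})\cup\{f\}$ is well-defined and is again a spanning tree: $T\cup\{f\}$ has exactly the fundamental cycle $C$, and removing $g\in C$ restores acyclicity while keeping every vertex connected. By construction $|T'\cap E_H|=|T\cap E_H|+1$, so these two integers cannot simultaneously equal $|V_H|-1$; taking $\gamma^*\in\{T,T'\}$ to be whichever violates the equality completes the argument.

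The main obstacle is the cycle step: one has to guarantee a single cycle containing both a prescribed $E_H$-edge and a prescribed $(E\setminus E_H)$-edge. This is precisely where biconnectedness is used, and where the multi-edge subtlety enters. Once the cycle is in hand, the spanning-tree manipulation is routine.
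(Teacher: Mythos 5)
Your proof is correct and rests on the same core mechanism as the paper's: exhibit two spanning trees related by a single swap of an edge of $E_H$ for an edge of $E\setminus E_H$, so that their intersection counts with $E_H$ differ by exactly one and therefore cannot both equal $|V_H|-1$. The only difference is that the paper obtains this swap by citing its Claim~\ref{cl:tree-norestrict} (itself proved via Menger's theorem), whereas you rederive it directly from Whitney's common-cycle theorem, handling the multi-edge subtlety the same way the paper does.
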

\begin{proof}
If $H\in\S(G)\setminus\{G\}$, then $\emptyset\ne E_H\subsetneq E.$ 
So, since $G$ is biconnected, Claim \ref{cl:tree-norestrict} implies the existence of edges $e_+\in E\setminus E_H$ and $e_-\in E_H$ and trees $\ga,\tilde{\ga}\in\Ga$, so that $\tilde{\ga}=(\ga-e_-)+e_+$. In particular, $|\tilde{\ga}\cap E_H|<|\ga\cap E_H|.$ And, this means that $\ga$ and $\tilde{\ga}$ can't both restrict to a tree on $H$. Since $H$ has the $u$-restriction property, this means that $\ga$ and $\tilde{\ga}$ can't both be in $\supp u.$ Hence, $\supp u \ne \Ga.$
\end{proof}

Essentially, this shows that the barrier to the existence of a minimizer for the determinant on a $\beta$-homogeneous graph is the existence of a strict subgraph of some biconnected component $G$ with the $\mu^*$-restriction property.  We shall explore this more deeply in Section~\ref{sec:homog-and-density}.

We can now complete the proof of the first main theorem of this paper.

\begin{proof}[Proof of Theorem~\ref{thm:main}]
Lemmas~\ref{lem:entropy-duality} and~\ref{lem:homogeneous-char-U} combined show that the determinant in~\eqref{eq:min-det} is bounded away from zero if and only if $G$ is $\beta$-homogeneous.  Moreover, since the determinant is positive for all choices of $\sigma$, if $G$ is not $\beta$-homogeneous, then no minimizer exists.

Now, suppose a minimizing set of weights $\sigma$ exists.  Corollary~\ref{cor:minimizer-is-WUST} shows that $\mu_\sigma\in\mathcal{P}(\Gamma)\cap\mathcal{U}_\beta$, which implies that $G$ is strictly $\beta$-homogeneous.  On the other hand, suppose $G$ is $\beta$-homogeneous, but that no minimizing set of weights $\si$ exists.  Then Lemma~\ref{lem:defective-mu} shows that $\Gamma_\beta\ne\Gamma$.  Since every $\UST_\si$ pmf $\mu_\si$ satisfies $\supp\mu_\sigma=\Gamma$, it must be the case that $\mathcal{P}(\Gamma)\cap\mathcal{U}_\beta$ contains no $\UST_\si$ pmfs and, therefore, that $G$ is not strictly $\beta$-homogeneous.
\end{proof}

\section{Homogeneity and density}\label{sec:homog-and-density}

Now we turn our attention to the connections between homogeneity and density, with the goal of proving Theorem~\ref{thm:main-dense}.  Suppose we have a graph $G$ and weights $\beta$ for which $G$ is \emph{not} $\beta$-homogeneous.  Our first task is to find a nearby $\hat\beta$ for which $G$ is $\hat\beta$-homogeneous.  We will use $\hat\beta$ to explore the $\beta$-density of $G$ and its subgraphs.  To this end, consider the following minimization problem.
\begin{equation}\label{eq:KL-beta}
\begin{split}
\underset{\hat\beta,\hat\mu}{\text{minimize}}\quad&-\sum_{e\in E}\beta(e)\log\hat\beta(e)\\
\text{subject to}\quad& \hat\beta(e) = \frac{\sum\limits_{\gamma\in\Gamma}\mathcal{N}(\gamma,e)\hat\mu(\gamma)}{|V|-1}\quad\text{for all }e\in E,\\
& \hat\mu\in\mathcal{P}(\Gamma),\qquad \hat\beta\in\mathcal{P}_+(E).
\end{split}
\end{equation}
For context, this problem is equivalent to the problem of minimizing the Kullback--Leibler divergence between the edge pmfs $\beta$ and $\hat\beta$ subject to the stated constraints.  If we remove the first constraint, the optimization problem becomes a minimization in $\hat\beta$ alone, yielding a simple lower bound via Gibbs' Inequality
\begin{equation*}
-\sum_{e\in E}\beta(e)\log\hat\beta(e) \ge -\sum_{e\in E}\beta(e)\log\beta(e).
\end{equation*}
By strict convexity, this bound is attained if and only if $\hat\beta = \beta$.

Since the objective function in~\eqref{eq:KL-beta} diverges to $+\infty$ as any $\hat\beta(e)$ approaches $0$, any choice of $(\hat\beta,\hat\mu)$ sufficiently close to the infimum lies inside a compact set and, therefore, a minimizer exists.  In fact, by the strict convexity of the objective function, the optimal $\hat\beta$ is unique.

We shall derive necessary conditions on minimizers in terms of weighted lengths of spanning trees.

\begin{definition}
Let $v\in\mathbb{R}^E_{\ge 0}$ and let $\gamma\in\Gamma$.  Then the \emph{$v$-length} of $\gamma$ is defined as
\begin{equation*}
\ell_v(\gamma) := \sum_{e\in E}\mathcal{N}(\gamma,e)v(e) = \sum_{e\in\gamma}v(e).
\end{equation*}
\end{definition}

\begin{lemma}\label{lem:necessary-KL-beta}
Suppose $G$ is not $\beta$-homogeneous, and suppose that $(\beta^*,\mu^*)$ is a minimizer for~\eqref{eq:KL-beta}.  Define $v(e) = \beta(e)/\beta^*(e)$, for all $e\in E$. Then, for all $\gamma\in\Gamma$,
\begin{equation*}
\ell_v(\gamma) \le |V|-1.
\end{equation*}
Moreover, if $\mu^*(\gamma)>0$, then equality holds.
\end{lemma}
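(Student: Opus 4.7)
The plan is to treat~(\ref{eq:KL-beta}) as a convex optimization problem and apply KKT conditions. I would first substitute the equality constraint $\hat\beta(e)=\frac{1}{|V|-1}\sum_\gamma \cN(\gamma,e)\hat\mu(\gamma)$ to eliminate $\hat\beta$, viewing the problem as optimization in $\hat\mu$ alone. Note that the constraint $\hat\beta(E)=1$ is automatic, since summing over $e$ gives $\frac{1}{|V|-1}\sum_\gamma \hat\mu(\gamma)(|V|-1)=1$ for any $\hat\mu\in\cP(\Gamma)$. The remaining constraints on $\hat\mu$ are nonnegativity $\hat\mu(\gamma)\ge 0$, the normalization $\sum_\gamma \hat\mu(\gamma)=1$, and the requirement that $\hat\beta(e)>0$ for each $e$; the last is automatically satisfied at the minimizer, since the objective diverges as any $\hat\beta(e)\to 0^+$, and it is already built into the stated existence of $(\beta^*,\mu^*)$.

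Next, I would introduce a scalar multiplier $t\in\R$ for the normalization and multipliers $\lambda(\gamma)\ge 0$ for the nonnegativity constraints, and compute $\partial\cL/\partial\hat\mu(\gamma)$. Using $\partial\hat\beta(e)/\partial\hat\mu(\gamma)=\cN(\gamma,e)/(|V|-1)$, the stationarity condition at the minimizer becomes
\[
-\frac{1}{|V|-1}\sum_{e\in E}\frac{\beta(e)}{\beta^*(e)}\,\cN(\gamma,e)\;-\;t\;-\;\lambda(\gamma)\;=\;0,
\]
which, with the definition $v(e)=\beta(e)/\beta^*(e)$, is precisely $\ell_v(\gamma)=-(|V|-1)\bigl(t+\lambda(\gamma)\bigr)$. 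By complementary slackness, $\lambda(\gamma)=0$ whenever $\mu^*(\gamma)>0$, so $\ell_v(\gamma)=-(|V|-1)t$ is constant on $\supp\mu^*$.

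To identify this constant I would take the $\mu^*$-expectation of $\ell_v$ and swap summations:
\[
\sum_{\gamma\in\Gamma}\mu^*(\gamma)\ell_v(\gamma)=\sum_{e\in E}v(e)\sum_{\gamma\in\Gamma}\cN(\gamma,e)\mu^*(\gamma)=(|V|-1)\sum_{e\in E}v(e)\beta^*(e)=(|V|-1)\sum_{e\in E}\beta(e)=|V|-1,
\]
using the defining relation for $\beta^*$ and $\beta\in\cP_+(E)$. Since $\ell_v$ is constant on $\supp\mu^*$ and $\sum_\gamma \mu^*(\gamma)=1$, this constant must equal $|V|-1$, which yields the equality claim. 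For an arbitrary $\gamma\in\Gamma$, the stationarity equation and $\lambda(\gamma)\ge 0$ together give $\ell_v(\gamma)=(|V|-1)-(|V|-1)\lambda(\gamma)\le |V|-1$, completing the inequality.

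The main subtlety I anticipate is the rigorous justification of the Lagrangian calculus in the presence of the open constraint $\hat\beta\in\cP_+(E)$ and the simultaneous presence of linear equalities and inequalities; the cleanest way to handle this is to observe that the reduction to a problem in $\hat\mu$ alone leaves only standard KKT conditions on the simplex $\cP(\Gamma)$, with the $\hat\beta>0$ requirement silently enforced by the coercivity of $-\log$. The hypothesis that $G$ is not $\beta$-homogeneous is not actually used in this argument, only that a minimizer exists (which was established by the compactness remarks preceding the lemma).
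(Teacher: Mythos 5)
Your argument is correct and is essentially the paper's proof in Lagrangian dress: the paper perturbs along $\mu_t=(1-t)\mu^*+t\delta_\gamma$ and reads off the sign of the first-order term, which is exactly the KKT stationarity-plus-complementary-slackness condition you derive, and your treatment of the open constraint $\hat\beta>0$ (inactive at the minimizer by coercivity of $-\log$) and your observation that $\beta$-inhomogeneity is not needed both match the situation in the paper. The only cosmetic difference is that the paper's normalized perturbation produces the constant $|V|-1$ directly, whereas you recover the multiplier by averaging $\ell_v$ against $\mu^*$ via the constraint and $\beta(E)=1$; both computations are valid.
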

\begin{proof}
Let $\gamma\in\Gamma$ and let $\delta_\gamma\in\mathcal{P}(\Gamma)$ be the Dirac mass on $\gamma$.  For the real parameter $t$, define
\begin{equation*}
\mu_t = (1-t)\mu^* + t\delta_\gamma.
\end{equation*}
Defining $\beta_t$ from $\mu_t$ as in~\eqref{eq:beta-eta-verified} yields
\begin{equation*}
\beta_t(e) = (1-t)\beta^*(e) + \frac{t}{|V|-1}\mathcal{N}(\gamma,e).
\end{equation*}
By construction, $\mu_t(\Gamma)=\beta_t(E)=1$ for any choice of $t$.  Therefore, as long as $\mu_t\ge 0$ and $\beta_t>0$, we have that $(\beta_t,\mu_t)$ is feasible for~\eqref{eq:KL-beta}. Since $\beta^*>0$ by assumption, $\beta_t>0$ for all $t$ in a neighborhood of $0$.

Note that
\begin{equation*}
\log\beta_t(e) = \log\beta^*(e) - t\left(1-\frac{\mathcal{N}(\gamma,e)}{(|V|-1)\beta^*(e)}\right) + O(t^2),
\end{equation*}
so
\begin{equation*}
-\sum_{e\in E}\beta(e)\log\beta_t(e) = -\sum_{e\in E}\beta(e)\log\beta^*(e)
+ \frac{t}{|V|-1}(|V|-1-\ell_v(\gamma)) + O(t^2).
\end{equation*}
If $\mu^*(\gamma)>0$, then $(\beta_t,\mu_t)$ is feasible for $t\in(-\ep,\ep)$ for some small $\ep$, implying that $\ell_v(\gamma)=|V|-1$.  If $\mu^*(\gamma)=0$, then $(\beta_t,\mu_t)$ is feasible for small positive $t$, implying the inequality.
\end{proof}

\begin{lemma}\label{lem:hat-beta-restricts}
Suppose $G$ is not $\beta$-homogeneous, and
let $\beta^*$, $\mu^*$ and $v$ be as in Lemma~\ref{lem:necessary-KL-beta}.  Define $M=\max_e v(e)$
and let $H=(V_H,E_H)$ be any connected component of the subgraph of $G$ induced by the edges where $v(e)$ attains this maximum.  Then, $H$ has the $\mu^*$-restriction property.
\end{lemma}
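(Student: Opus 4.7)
\medskip

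The plan is to argue by contradiction, using the two inequalities from Lemma~\ref{lem:necessary-KL-beta} as the main workhorses: $\ell_v(\gamma)\le |V|-1$ for every spanning tree $\gamma$, with equality whenever $\mu^*(\gamma)>0$. Fix $\gamma\in\supp\mu^*$ and suppose $\gamma\cap E_H$ fails to be a spanning tree of $H$. Then there exist $x,y\in V_H$ lying in different connected components of the forest $(V_H,\gamma\cap E_H)$. Since $H$ itself is connected, we may choose a path in $H$ from $x$ to $y$ and, along it, find an edge $e^*\in E_H$ whose two endpoints $x',y'$ lie in distinct components of $(V_H,\gamma\cap E_H)$. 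By construction $v(e^*)=M$.

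Next, consider the fundamental cycle created by adding $e^*$ to the spanning tree $\gamma$; the edges of this cycle other than $e^*$ form a path $P'\subseteq\gamma$ connecting $x'$ and $y'$. The key geometric step is to exhibit an edge $e'\in P'$ with $v(e')<M$. For this, let $E^*:=\{e\in E:v(e)=M\}$ and note the following structural fact about its connected components: if $e\in E^*$ has one endpoint in $V_H$, then its other endpoint must also lie in $V_H$ (otherwise $e$ would enlarge the connected component $H$ in the $E^*$-induced subgraph). Consequently, if every edge of $P'$ were in $E^*$, a walk along $P'$ starting at $x'\in V_H$ would remain in $V_H$ the whole time, and every traversed edge would lie in $E_H$; but then $P'$ would be a path in $(V_H,\gamma\cap E_H)$ from $x'$ to $y'$, contradicting the choice of $e^*$. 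So some $e'\in P'\setminus E^*$ exists, and $v(e')<M$.

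To finish, perform the tree exchange $\tilde\gamma:=(\gamma\setminus\{e'\})\cup\{e^*\}$. Since $e'$ lies on the unique $\gamma$-cycle created by adding $e^*$, $\tilde\gamma$ is again a spanning tree of $G$, and
\begin{equation*}
\ell_v(\tilde\gamma)=\ell_v(\gamma)-v(e')+v(e^*)=(|V|-1)+\bigl(M-v(e')\bigr)>|V|-1,
\end{equation*}
contradicting the universal upper bound in Lemma~\ref{lem:necessary-KL-beta}. Hence $\gamma\cap E_H$ must be a spanning tree of $H$, i.e.\ $|\gamma\cap E_H|=|V_H|-1$, which is exactly the $\mu^*$-restriction property of $H$.

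The step I expect to be the trickiest is the one isolating an edge $e'\in P'$ with $v(e')<M$: one must use both that $H$ is a full connected component of the $E^*$-subgraph (not merely contained in one) and that $H$ is edge-induced, so that $V_H$ is closed under $E^*$-traversal. Everything else is a clean matroid-style swap argument powered by Lemma~\ref{lem:necessary-KL-beta}.
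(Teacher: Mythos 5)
Your proof is correct and follows essentially the same route as the paper: assume a tree in $\supp\mu^*$ fails to restrict, find an $M$-valued edge $e^*\in E_H$ joining two components of the restricted forest, swap it with a lighter edge on the fundamental cycle, and contradict the bound $\ell_v\le|V|-1$ from Lemma~\ref{lem:necessary-KL-beta}. In fact you are more careful than the paper at the one delicate point: the paper simply takes an edge $e_-\in E\setminus E_H$ on the path and asserts $v(e_+)-v(e_-)>0$, whereas you justify $v(e')<M$ via the observation that $V_H$ is closed under traversal of max-valued edges (since $H$ is a full connected component of the subgraph induced by $\{e:v(e)=M\}$), which is exactly the argument needed to rule out hitting a max-valued edge from a different component.
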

\begin{proof}
Note that the max cannot be attained on all of $G$, else $\be^*=\be$, which we have excluded by assuming that $G$ is not $\be$-homogeneous.

Let $\gamma\in\Gamma$ have the property that $|\gamma\cap E_H|<|V_H|-1$.  Since $H$ is connected, this means that $\ga\cap H$ is a forest with at least two components. Let $e_+=\{x,y\}$ be an edge in $H\setminus \ga$, so that $x$ and $y$ belong to distinct components of $\ga\cap H.$ Since $\ga$ is a tree, it contains a path $p$ connecting $x$ to $y$. Also, since $x$ and $y$ belong to distinct components of $\ga\cap H$, it must be the case that the path $p$ visits an edge $e_-$ in $E\setminus E_H.$

Next, we swap $e_+$ and $e_-$ to get a new spanning tree $\gamma'=(\gamma\cup\{e_+\})\setminus\{e_-\}$.  However, by Lemma \ref{lem:necessary-KL-beta} we have,
\begin{equation*}
|V|-1 \ge \ell_v(\gamma') = \ell_v(\gamma) + v(e_+) - v(e_-) > \ell_v(\gamma).
\end{equation*}
Thus, by the `Moreover' part of Lemma~\ref{lem:necessary-KL-beta}, $\mu^*(\gamma)=0$.
\end{proof}

The proof of the connection between homogeneity and density comes from a deeper understanding of the restriction property.
\begin{lemma}\label{lem:homog-density}
Let $\cU_\be$ be the cone defined in (\ref{eq:cone-ubeta}). Suppose that $G$ is $\beta$-homogeneous, so that $\cU_\be\ne\emptyset$, and let $u\in\mathcal{U}_\beta\setminus\{0\}$. Consider a subgraph $H=(V_H,E_H)\in\S(G)$.  Then, \begin{equation}\label{eq:weakly-dense}
\theta_\beta(H)\le\theta_\beta(G),
\end{equation}
with equality holding if and only if $H$ has the $u$-restriction property.
\end{lemma}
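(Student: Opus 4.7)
The plan is to run Lemma~\ref{lem:u-beta} directly against the purely combinatorial bound $|\gamma\cap E_H|\le |V_H|-1$, which holds for every spanning tree $\gamma$ of $G$. The weak inequality~\eqref{eq:weakly-dense} should then drop out by averaging, and the equality case should reduce to recognising when the combinatorial bound is tight term by term.

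More precisely, I would first apply Lemma~\ref{lem:u-beta} with $E'=E_H$ to obtain
\begin{equation*}
u(\Gamma)\beta(E_H) \;=\; \frac{1}{|V|-1}\sum_{\gamma\in\Gamma}|\gamma\cap E_H|\,u(\gamma).
\end{equation*}
Next, I would observe that since $\gamma$ is acyclic in $G$, its restriction $(V_H,\gamma\cap E_H)$ to the vertex set of $H$ is a forest; hence $|\gamma\cap E_H|\le |V_H|-1$ for \emph{every} $\gamma\in\Gamma$, with equality if and only if $\gamma\cap E_H$ is a spanning tree of the connected graph $H$. Plugging this inequality into the display and using $u\ge 0$ and $u(\Gamma)>0$ (which holds because $u\neq 0$) yields
\begin{equation*}
u(\Gamma)\beta(E_H) \;\le\; \frac{|V_H|-1}{|V|-1}\,u(\Gamma),
\end{equation*}
and dividing through by $u(\Gamma)(|V_H|-1)$ together with $\beta(E)=1$ gives $\theta_\beta(H)\le \theta_\beta(G)$.

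For the equality statement, the inequality $|\gamma\cap E_H|\le |V_H|-1$ is applied inside a nonnegative sum weighted by $u(\gamma)$, so equality in~\eqref{eq:weakly-dense} forces equality in every term with $u(\gamma)>0$. That is precisely the condition that $|\gamma\cap E_H|=|V_H|-1$ for all $\gamma\in\supp u$, which by Definition~\ref{def:u-restriction} is the $u$-restriction property for $H$. Conversely, if $H$ has the $u$-restriction property, then the sum evaluates exactly to $\frac{|V_H|-1}{|V|-1}\,u(\Gamma)$, producing equality. I don't expect any substantial obstacle; the only mild subtlety is the forest-vs-spanning-tree distinction in $H$, but since $H$ is connected and the vertex set of interest is $V_H$, the standard bound $|\text{forest on }n\text{ vertices}|\le n-1$ (with equality iff it is a spanning tree of the connected graph) closes the gap cleanly.
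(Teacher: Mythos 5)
Your proposal is correct and follows essentially the same route as the paper: apply Lemma~\ref{lem:u-beta} with $E'=E_H$, bound $|\gamma\cap E_H|\le |V_H|-1$ using the fact that the restriction of a spanning tree to $H$ is a forest, and read off the equality case termwise over $\supp u$. Your explicit justification of the forest bound and of $u(\Gamma)>0$ is slightly more detailed than the paper's, but the argument is the same.
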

\begin{proof}
By Lemma~\ref{lem:u-beta}, and the fact that $\be(E)=1$, we have
\begin{equation*}
\beta(E_H) = \frac{\theta_\beta(G)}{u(\Gamma)}\sum_{\gamma\in\Gamma}|\gamma\cap E_H|u(\gamma) \le \theta_\beta(G)(|V_H|-1),
\end{equation*}
since $H$ is connected, proving the inequality (\ref{eq:weakly-dense}).  Moreover, equality holds if and only if $|\gamma\cap E_H|=|V_H|-1$ for every $\gamma\in\supp u$.
\end{proof}

Now we are ready to prove that $\beta$-density and $\beta$-homogeneity are equivalent, and likewise for the strict versions of each.
\begin{proof}[Proof of Lemma~\ref{lem:dense-is-homogeneous}]
Suppose that $G$ is $\beta$-homogeneous.  By Lemma~\ref{lem:homogeneous-char-U}, there exists a nonzero $u\in\mathcal{U}_\beta$.  Let $H\in\S(G)$, then  Lemma~\ref{lem:homog-density} implies that $\theta_\beta(H)\le\theta_\beta(G)$. So, by Definition \ref{def:beta-dense}, $G$ is $\beta$-dense.

Now, suppose that $G$ is not $\beta$-homogeneous.  Let $\beta^*$, $\mu^*$, $M$ and $H$ be as in Lemmas~\ref{lem:necessary-KL-beta} and~\ref{lem:hat-beta-restricts}.  
Note that $M>1$.  Otherwise, $\beta\le \beta^*$ and, since they are both pmfs on $E$, we have $\beta=\beta^*$, which would imply that $G$ is $\beta$-homogeneous. Also, note that $\theta_\beta(G) = \theta_{\beta^*}(G)=(|V|-1)^{-1},$ and, by definition of $H$,
\begin{equation*}
\theta_\beta(H)=M\theta_{\beta^*}(H).
\end{equation*}

By Lemma~\ref{lem:hat-beta-restricts}, $H$ has the $\mu^*$-restriction property. Summing the constraint in (\ref{eq:KL-beta}) over all the edges in $E_H$, we get
\begin{equation*}
\frac{\beta^*(E_H)}{|V_H|-1} = \frac{1}{(|V_H|-1)(|V|-1)}\sum_{\gamma\in\Gamma}\mu^*(\gamma)|\gamma\cap E_H| = \frac{1}{|V|-1}.
\end{equation*}
So,
\begin{equation*}
\theta_\beta(H) = M\theta_{\beta^*}(H) > \theta_{\beta^*}(H) = (|V|-1)^{-1} = \theta_\beta(G),
\end{equation*}
which shows that $G$ is not $\beta$-dense.
\end{proof}

\begin{proof}[Proof of Lemma~\ref{lem:sdense-is-shomogeneous}]
First, suppose that $G$ is  strictly $\beta$-dense, then $G$ must be biconnected. Suppose not. Then, there exists a decomposition of $G$ into two subgraphs $H_i=(V_i,E_i)\in\S(G)$, for $i=1,2$, with the property that $|V_1\cap V_2|=1$.  That is, the graphs $H_1$ and $H_2$ are edge-disjoint and meet at a single articulation vertex. Write
\begin{equation*}
\theta_\beta(G) = \frac{\beta(E)}{|V|-1} = \frac{\beta(E_1)+\beta(E_2)}{|V|-1}
= \frac{|V_1|-1}{|V|-1}\theta_\beta(H_1) + \frac{|V_2|-1}{|V|-1}\theta_\beta(H_2),
\end{equation*}
Since $|V_1|+|V_2|=|V|+1$, we have
shown that $\theta_\beta(G)$ is a convex combination of the $\theta_\beta(H_i)$.  Since $G$ is $\beta$-dense, $\theta_\beta(H_i)\le\theta_\beta(G)$, for $i=1,2$, and therefore, both subgraphs must have the same $\beta$-density as $G$, which contradicts the assumption that $G$ is strictly $\be$-dense.

Next, assume that $G$ is strictly $\beta$-dense, but not strictly $\beta$-homogeneous. Since $G$ is in particular $\be$-dense, then by Lemma \ref{lem:dense-is-homogeneous}, $G$ is $\be$-homogeneous. Since $G$ is not strictly $\be$-homogeneous, by Corollary \ref{cor:minimizer-is-WUST}, problem~\eqref{eq:min-det} does not have a minimizer. Hence,
as shown in the proof of Lemma~\ref{lem:defective-mu}, we can find a subgraph $H\in\S(G)$ (a connected component of $G\setminus E_0$) that has the $\mu^*$-restriction property, where $\mu^*$ is the entropy maximizing pmf from~\eqref{eq:max-entropy}. Lemma~\ref{lem:homog-density} implies that $\theta_\beta(H)=\theta_\beta(G)$, but this contradicts the fact that $G$ is strictly $\beta$-dense.

For the other direction, suppose that $G$ is strictly $\beta$-homogeneous and biconnected.  Then there exists a $\UST_\si$ pmf $\mu_\sigma\in\mathcal{U}_\beta$, with corresponding edge usage probability vector $\eta_\sigma$ satisfying $\beta = \theta_\beta(G)\eta_\sigma$.  Let $H\in\S(G)$.  Since $G$ is biconnected and since $\supp\mu_\sigma=\Gamma$, Lemma~\ref{lem:u-restriction} shows that $H$ cannot have the $\mu_\sigma$-restriction property.  So, by Lemma~\ref{lem:homog-density}, $\theta_\beta(H)<\theta_\beta(G)$, showing that $G$ is strictly $\beta$-dense.
\end{proof}

\begin{proof}[Proof of Theorem~\ref{thm:main-dense}]
By Theorem \ref{thm:main} and Lemma \ref{lem:dense-is-homogeneous}, we see that the determinant in~\eqref{eq:min-det} is bounded away from zero if and only if $G$ is $\beta$-dense. 

Now, assume that a minimizing $\si$ exists. Then, by Theorem \ref{thm:main}, $G$ is strictly homogeneous. If we decompose $G$ into its biconnected components $\{G_i\}_{i=1}^p$, and use (\ref{eq:gamma-decomposition}) and
(\ref{eq:mass-decomposition}), we see that each component is also strictly homogeneous. Therefore, by Lemma \ref{lem:sdense-is-shomogeneous}, each component is strictly $\be$-dense. Conversely, if every biconnected component is strictly $\be$-homogeneous, then, by Lemma \ref{lem:sdense-is-shomogeneous}, each component is strictly $\be$-dense. Now suppose $H\in\cS(G)$. Write $H_i=H\cap G_i$, for $i=1,\dots,p$. Then, 
\[
|V_H|-1=\sum_{i=1}^p (|V_{H_i}|-1).
\]
Therefore, as we did in the proof of Lemma~\ref{lem:sdense-is-shomogeneous}, we can write $\theta_\be(H)$ as a convex combination:
\[
\theta_\be(H) = \sum_{i=1}^P t_i \theta_\be(H_i),
\]
where $0\le t_i\le 1$ and $\sum_{i=1}^p t_i=1.$ And, this implies that $G$ itself is strictly $\be$-dense.
\end{proof}

\section{Determinant bounds and the FEU problem}\label{sec:det-bounds}

The analysis presented above provides an answer to the following problem.  Suppose the graph $G$ is given.  Under what conditions on the positive weights $b\in\mathbb{R}^E_{>0}$ can we find a bound of the form
\begin{equation}\label{eq:determinant-bound}
\detp L_\sigma \ge c\prod_{e\in E}\sigma(e)^{b(e)}\quad\text{for all }\sigma\in\mathbb{R}^E_{>0}
\end{equation}
for some $c>0$?

\begin{theorem}\label{thm:determinant-bound}
Given $c>0$ and $b\in\R_{>0}^E$, an inequality of the type~\eqref{eq:determinant-bound} holds for $G$, if and only if $b(E)=|V|-1$ and $G$ is $\beta$-homogeneous, with $\beta:=(|V|-1)^{-1}b$.  The largest choice of constant $c$ in this case is
\begin{equation}\label{eq:optimal-c}
c^* = |V|\exp(H(\mu^*))
\end{equation}
where $\mu^*$ is the entropy maximizing pmf in $\mathcal{U}_\beta$.  Moreover, the bound is attained for some $\sigma\in\R_{>0}^E$ if and only if $G$ is strictly $\beta$-homogeneous.
\end{theorem}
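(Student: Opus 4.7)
The plan is to reduce the bound \eqref{eq:determinant-bound} to the minimum determinant problem \eqref{eq:min-det} by a homogeneity/rescaling argument, then invoke Theorem~\ref{thm:main} and Lemma~\ref{lem:entropy-duality}. First, I would apply Kirchhoff's Matrix Tree Theorem to rewrite \eqref{eq:determinant-bound} as
\begin{equation*}
\sum_{\gamma\in\Gamma}\sigma[\gamma] \;\ge\; \frac{c}{|V|}\prod_{e\in E}\sigma(e)^{b(e)}\qquad\text{for all }\sigma\in\R_{>0}^E.
\end{equation*}
Replacing $\sigma$ by $t\sigma$ for $t>0$, the left-hand side scales as $t^{|V|-1}$ while the right-hand side scales as $t^{b(E)}$. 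If $b(E)\ne|V|-1$, letting $t\to 0$ or $t\to\infty$ produces a contradiction, so necessarily $b(E)=|V|-1$. Setting $\beta:=(|V|-1)^{-1}b\in\cP_+(E)$, both sides now have the same degree of homogeneity, so the inequality for all $\sigma>0$ is equivalent to requiring
\begin{equation*}
\sum_{\gamma\in\Gamma}\sigma[\gamma] \;\ge\; \frac{c}{|V|}\qquad\text{whenever}\qquad \prod_{e\in E}\sigma(e)^{\beta(e)}=1,
\end{equation*}
by taking any $\sigma$ and rescaling it to lie on the product-constraint surface.

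Next, I would recognize this as the assertion that the infimum $p^*$ of problem~\eqref{eq:min-det} satisfies $p^*\ge c/|V|$. Thus some $c>0$ admits a valid bound iff $p^*>0$, and the largest admissible constant is $c^*=|V|p^*$. By Theorem~\ref{thm:main}, $p^*>0$ if and only if $G$ is $\beta$-homogeneous, which yields the first part of the claim together with the expression $b(E)=|V|-1$. By Lemma~\ref{lem:entropy-duality}, whenever $G$ is $\beta$-homogeneous we have $p^*=\exp(H(\mu^*))$, where $\mu^*$ is the unique entropy-maximizing pmf in $\cU_\beta$; substituting into $c^*=|V|p^*$ produces the formula \eqref{eq:optimal-c}.

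Finally, for the attainment statement: the bound \eqref{eq:determinant-bound} with $c=c^*$ holds with equality at some $\sigma\in\R_{>0}^E$ iff, after rescaling that $\sigma$ to satisfy the product constraint, it achieves the infimum $p^*$ of \eqref{eq:min-det}; conversely any minimizer of \eqref{eq:min-det} realizes equality in \eqref{eq:determinant-bound}. Hence equality is attained for some $\sigma$ iff \eqref{eq:min-det} has a minimizer, which by Theorem~\ref{thm:main} holds iff $G$ is strictly $\beta$-homogeneous. The only genuinely delicate step is the scale-invariance reduction in the first paragraph, where one must be sure to conclude $b(E)=|V|-1$ rather than merely an inequality; everything else is a direct translation of the earlier results into the present inequality form.
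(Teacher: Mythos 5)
Your proposal is correct and follows essentially the same route as the paper: Kirchhoff's theorem plus the scaling argument forces $b(E)=|V|-1$, and the rest reduces to the minimum determinant problem, whose resolution via Lemma~\ref{lem:entropy-duality} and the characterization of minimizers gives the value of $c^*$ and the attainment criterion. The only cosmetic difference is that you cite Theorem~\ref{thm:main} directly where the paper invokes the underlying Lemmas~\ref{lem:homogeneous-char-U}, \ref{lem:entropy-duality} and Corollary~\ref{cor:minimizer-is-WUST}, which amounts to the same argument.
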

\begin{proof}
The goal is to bound the function
\begin{equation*}
F(\sigma) := \detp L_\sigma\prod_{e\in E}\sigma(e)^{-b(e)}
\end{equation*}
away from zero.
Taking a logarithm and using Theorem~\ref{thm:matrix-tree}, we see that this is equivalent to finding a positive $c$ such that, for every $\sigma\in\R_{>0}^E$,
\begin{equation}\label{eq:bound-log}
\log\frac{c}{|V|} \le \log\sum_{\gamma\in\Gamma}\sigma[\gamma] - \sum_{e\in E}b(e)\log\sigma(e).
\end{equation}
Note that the first term on the right hand-side scales as follow, for $t>0$,
\[
\log\sum_{\gamma\in\Gamma}(t\sigma)[\gamma]=\log\sum_{\gamma\in\Gamma}\sigma[\gamma]+(|V|-1)\log t,
\]
while the second term scales as
\[
\sum_{e\in E}b(e)\log(t\sigma)(e)=\sum_{e\in E}b(e)\log\sigma(e)+b(E)\log t.
\]
Therefore, in order for a lower bound as in (\ref{eq:bound-log}) to hold, it is necessary that $b(E)=|V|-1$.
Hence, setting $\beta:=(|V|-1)^{-1}b$, we get that $\beta\in\cP_+(E)$. Next, we can rewrite (\ref{eq:bound-log}) as
\[
\log\frac{c}{|V|} \le \log\sum_{\gamma\in\Gamma}\sigma[\gamma] - (|V|-1)\sum_{e\in E}\be(e)\log\sigma(e).
\]
In particular, the right hand-side can now be identified with the function $g$ defined in (\ref{eq:g-sigma}).
Thus, by Lemma~\ref{lem:homogeneous-char-U} and Lemma~\ref{lem:entropy-duality}, for (\ref{eq:bound-log}) to hold, it is necessary and sufficient that $G$ be $\beta$-homogeneous.  This is equivalent to saying that $\beta$ is a strictly positive edge-usage probability vector for some pmf $\mu\in\mathcal{P}(\Gamma)$.

In this case, the infimum of $g(\sigma)$ over all weights is equal to $H(\mu^*)$, yielding the optimal value $c^*$ in (\ref{eq:optimal-c}).  By Corollary~\ref{cor:minimizer-is-WUST}, a minimizing choice of weights $\sigma$ exists if and only if $\mu^*=\mu_\sigma$, which is possible if and only if $G$ is strictly $\beta$-homogeneous.
\end{proof}

In general, then, there are many choices of $b$ which support an inequality of the form in~\eqref{eq:determinant-bound}.  Any valid choice of $b$ has the average value of
\begin{equation*}
\frac{1}{|E|}\sum_{e\in E}b(e) = \frac{|V|-1}{|E|}
\end{equation*}
across all edges.  It is interesting to ask how close we can make $b$ to the constant vector.  For instance, we might ask for the value of $b$ supporting~\eqref{eq:determinant-bound} that has smallest variance.  This gives rise to the fairest edge usage (FEU) problem analyzed in~\cite{achpcst:disc-math2021}.

In the language of that paper, a graph was called homogeneous if the minimum variance is zero (i.e., if $\beta$ can be made constant). By Theorem~\ref{thm:determinant-bound}, the homogeneous graphs are exactly those graphs that support an inequality of the form
\begin{equation*}
\detp L_\sigma \ge c\left(\prod_{e\in E}\sigma(e)\right)^{\frac{|V|-1}{|E|}},
\end{equation*}
for some constant $c>0$,
and the strictly homogeneous graphs are those for which the bound can be attained for some $\sigma>0$ and $c>0$.

\section{Applications to spanning tree modulus}\label{sec:connections-to-sptmod}

The theory of spanning tree modulus was first proposed in \cite{achpcst:disc-math2021} and we refer to that paper for all the necessary background. The idea is to think of the spanning trees $\Ga$ as a family of objects and take its $2$-modulus, the way one does for the family $\Ga(a,b)$ of all paths from $a$ to $b$. In the case of $\Ga(a,b)$, the $2$-modulus recovers the effective conductance from $a$ to $b$, in the sense of electrical networks. The dual problem of $2$-modulus has an interesting probabilistic interpretation we call the Minimum Expected Overlap ($\MEO$) problem. Namely, given a pmf $\mu\in\cP(\Ga)$, the expected overlap of two independent random trees with law $\mu$ can be computed using the usage matrix in (\ref{eq:usage-matrix}) as follow:
\[
\bE_\mu\left|\underline{\ga}\cap\underline{\ga'}\right|=\sum_{\ga\in\Ga}\sum_{\ga'\in\Ga}\mu(\ga)\mu(\ga')|\ga\cap\ga'|=\mu^T\cN\cN^T\mu.
\]
The goal of $\MEO$ is to minimize this expected overlap over all $\mu\in\cP(\Ga).$ Optimal pmf's $\mu^*$ always exist but are not necessarily unique, while the corresponding edge probabilities $\eta^*(e)=\bP_{\mu^*}\left(e\in\underline{\ga}\right)$ are uniquely defined and are related to another modulus problem for the family of feasible partitions.

The optimal density $\eta^*$ turned out to be very important. For instance, the set where $\eta^*$ attains its minimum value gives rise to the notion of homogeneous core, which is related to the combinatorial problem of finding a subgraph of maximum denseness \cite[Theorem 5.9]{achpcst:disc-math2021}, and is the main step in the process of deflation \cite[Theorem 5.7]{achpcst:disc-math2021}.

The results in this paper lead to a more precise version of the maximum denseness problem, which we call the {\it strict denseness} problem. In this section we will assume that $\be$ is constant, and refer to graphs that are {\it dense} or {\it strictly dense}, meaning with respect to $\be$ constant.
Recall the set of all subgraphs $\cS(G)$ from Definition \ref{def:subgraphs}.
Then, we say that $H\in\cS(G)$ solves the strict denseness problem, if $\theta(H)=\max_{H'\in\cS(G)}\theta(H')$ and any  proper subgraph $H''$ of $H$ satisfies $\theta(H'')<\theta(H)$. In other words, $H$ solves the maximum denseness problem and is a non-trivial connected subgraph that is itself strictly dense.
When this happens, we say that $H$ is a {\it minimal core} for $G$. Also, we define the {\it shrunk graph} $G/H$ obtained by identifying every vertex in $H$ to a single vertex $v_H$, and removing all resulting self-loops, but keeping all resulting multi-edges.
The following deflation result is similar to \cite[Theorem 5.7]{achpcst:disc-math2021} but uses minimal cores instead.
\begin{theorem}\label{thm:mincore}
Let $G=(V,E)$ be a biconnected graph, possibly  with multi-edges, but no selfloops. Then, either 
\bi
\item[(i)] $G$ is strictly dense, or 
\item[(ii)] $G$ admits a  minimal core $H_0\subsetneq G$. Moreover, in this case, 
\bi
\item[(a)] $H_0$ has the {\rm fair restriction property} with respect to $G$, meaning that for every fair tree $\ga\in\Ga_G^f$, the restriction $\ga\cap H_0$ is a spanning tree in $H_0$;
\item[(b)] for any fair tree $\ga$ of $G$, the projection of $\ga\setminus H_0$ onto the shrunk graph $G/H_0$ is a fair spanning tree of $G/H_0$; 
\item[(c)] fair trees of the core $H_0$ can be coupled with fair trees of the shrunk graph $G/H_0$ to produce fair trees of $G$.
\ei
\ei
\end{theorem}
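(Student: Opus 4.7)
The plan is to produce the minimal core by an extremal argument and then read off properties (a), (b), (c) from the density/homogeneity dictionary developed in the paper. If $G$ is strictly dense, case (i) holds and there is nothing to prove, so assume not. Choose $H_0\in\S(G)$ that maximizes $\theta$ (with respect to constant $\beta$) and, among all such maximizers, has the smallest edge count $|E_{H_0}|$. Then $H_0$ is strictly dense: if a proper $H''\in\S(H_0)$ satisfied $\theta(H'')\ge\theta(H_0)$, then $H''$ would belong to $\S(G)$ and either exceed the maximum (a contradiction) or match it with fewer edges (contradicting the choice of $H_0$). Since $G$ is not strictly dense by assumption, $H_0\subsetneq G$, giving case (ii); moreover $\theta(H_0)=\theta(G)$.

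For (a) the plan is to invoke the equality case of Lemma~\ref{lem:homog-density}. Let $\mu^*$ denote the entropy-maximizing pmf in $\mathcal{U}_\beta$ (nontrivial, since the existence of $H_0$ implies $G$ is $\beta$-dense and hence $\beta$-homogeneous by Lemma~\ref{lem:dense-is-homogeneous}). Applying Lemma~\ref{lem:homog-density} with $u=\mu^*$ and $H=H_0$, the equality $\theta_\beta(H_0)=\theta_\beta(G)$ forces $H_0$ to have the $\mu^*$-restriction property: every $\gamma\in\supp\mu^*$ restricts as a spanning tree of $H_0$. Since $\supp\mu^*=\Gamma_\beta$ (the $\beta$-fair trees), this yields (a).

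For (b), given a fair tree $\gamma$ of $G$, write $\gamma=\gamma_H\sqcup\gamma'$ with $\gamma_H=\gamma\cap E_{H_0}$ and $\gamma'=\gamma\setminus E_{H_0}$. By (a) $\gamma_H$ spans $H_0$, so its contraction makes $\pi(\gamma')$ a spanning tree of $G/H_0$ (spanning because $\gamma$ spans $V$; acyclic because a cycle in $\pi(\gamma')$ would lift to a cycle in $\gamma$ using $\gamma_H$ as a detour through $H_0$). To see that this tree is fair in $G/H_0$, I propose to push $\mu^*$ forward under $\pi$ and verify the result lies in $\mathcal{U}_{\tilde\beta}$ for constant $\tilde\beta$ on $G/H_0$. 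The key arithmetic identity is the mediant equality
\[
\frac{|E_{G/H_0}|}{|V_{G/H_0}|-1}=\frac{|E|-|E_{H_0}|}{|V|-|V_{H_0}|}=\frac{|E|}{|V|-1},
\]
which holds precisely because $\theta(H_0)=\theta(G)$. For (c), the coupling is defined by concatenation: given $\gamma_H\in\Gamma^f_{H_0}$ and $\tilde\gamma\in\Gamma^f_{G/H_0}$, lift $\tilde\gamma$ via the natural bijection between $E_{G/H_0}$ and $E\setminus E_{H_0}$ and set $\gamma:=\gamma_H\cup\pi^{-1}(\tilde\gamma)$; a short count $(|V_{H_0}|-1)+(|V|-|V_{H_0}|)=|V|-1$, plus a connectivity and cycle check via projection, shows $\gamma$ is a spanning tree of $G$. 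Fairness follows from building the product pmf $\mu_{H_0}^*\otimes\tilde\mu^*$ and verifying, via the same mediant identity, that it lies in $\mathcal{U}_\beta$ so that $\gamma$ is in its support.

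The main obstacle I anticipate is the edge-usage bookkeeping in the last paragraph: checking that a constant $\beta$ on $G$ decomposes into constant $\tilde\beta$ on $G/H_0$ and constant $\beta_{H_0}$ on $H_0$, and that the pushforward and product pmfs genuinely inherit membership in the appropriate $\mathcal{U}$-cones. The mediant identity above is what makes this possible, and it is exactly the quantitative content of $\theta(H_0)=\theta(G)$. Handling multi-edges requires verifying that $\pi\colon E\setminus E_{H_0}\to E_{G/H_0}$ is a bijection (which it is, since we keep all non-loop multi-edges after contraction), and biconnectedness of $G$ together with the absence of self-loops ensures the standard tree-swap arguments from the proofs of Lemma~\ref{lem:u-restriction} and Claim~\ref{cl:tree-norestrict} remain available when invoked inside $H_0$ and $G/H_0$.
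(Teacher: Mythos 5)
Your first paragraph (existence of a minimal core via a maximizer of $\theta$ with fewest edges) is fine, and is in fact more explicit than the paper, which simply asserts that the strict denseness problem has a solution. The rest of the argument, however, rests on a claim that is false in general: that the existence of $H_0$ forces $\theta(H_0)=\theta(G)$, equivalently that $G$ is dense and hence homogeneous. The existence of a maximum-density subgraph is automatic (there are finitely many subgraphs); it says nothing about whether $G$ itself attains the maximum. The paper explicitly warns, right after the theorem statement, that in case (ii) ``$G$ may or may not be homogeneous.'' A concrete biconnected counterexample: take $K_4$ and attach a path $1\text{--}5\text{--}6\text{--}2$ between two of its vertices; then $\theta(G)=9/5$ while the minimal core $K_4$ has $\theta=2>\theta(G)$. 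In that situation $\mathcal{U}_\beta=\{0\}$ for constant $\beta$, there is no entropy-maximizing $\mu^*$, the set $\Gamma_\beta$ of constant-$\beta$-fair trees is empty, and the equality case of Lemma~\ref{lem:homog-density} that you invoke for (a) is unavailable. Your mediant identity $\frac{|E|-|E_{H_0}|}{|V|-|V_{H_0}|}=\frac{|E|}{|V|-1}$, which carries all of (b) and (c), likewise fails whenever $\theta(H_0)>\theta(G)$.

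The underlying issue is a misidentification of ``fair tree.'' The trees $\Gamma_G^f$ in the theorem are the fair trees of the Minimum Expected Overlap problem from \cite{achpcst:disc-math2021} (the supports of MEO-optimal pmfs), which exist for every graph; they coincide with $\Gamma_\beta$ for constant $\beta$ only when $G$ is homogeneous. The paper's proof accordingly does not use Lemma~\ref{lem:homog-density} at all: it observes that $H_0$ is vertex-induced (adding edges without adding vertices only raises $\theta$), cites \cite[Theorem 5.9]{achpcst:disc-math2021} to conclude that $H_0$ is a homogeneous core and therefore has the restriction property with respect to the MEO fair trees, and then gets (b) and (c) from the deflation results (Theorem 5.5, Lemma 5.6, Theorem 5.7) of that paper, which depend only on the restriction property and make no use of $\theta(H_0)=\theta(G)$. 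To repair your argument within the present paper's machinery you would need the non-homogeneous analogue (the perturbed weights $\beta^*$ of Lemmas~\ref{lem:necessary-KL-beta} and~\ref{lem:hat-beta-restricts}), not the equality case of Lemma~\ref{lem:homog-density}; as written, parts (a)--(c) are only established under the unstated extra hypothesis that $G$ is homogeneous.
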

Note that, if $G$ satisfies Theorem \ref{thm:mincore} (ii), then $G$ may or may not be homogeneous.
However, if $G$ is homogeneous and not strictly dense, then it must admit a proper subgraph with the same density. This prompted the following observation by Jason Clemens, see \cite{clemens-phdthesis}.
\begin{corollary}\label{cor:relprime}
If $G$, as in Theorem \ref{thm:mincore}, is homogeneous and the numbers $|E|$ and $|V|-1$ are relatively prime, then $G$ must be strictly dense.
\end{corollary}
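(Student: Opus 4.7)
The plan is to argue by contradiction, invoking Theorem~\ref{thm:mincore} to produce a proper minimal core and then using the coprimality hypothesis to collapse that core back to the whole graph.

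First, I will suppose for contradiction that $G$ is homogeneous, $\gcd(|E|,|V|-1)=1$, but $G$ is not strictly dense. By the dichotomy in Theorem~\ref{thm:mincore}, this places $G$ in case (ii), so there exists a minimal core $H_0=(V_{H_0},E_{H_0})\subsetneq G$. Homogeneity of $G$ amounts to $1$-denseness (i.e., $\beta$-denseness with constant $\beta$), so $\theta(G)=\max_{H'\in\mathcal{S}(G)}\theta(H')$. Since $H_0$ achieves this maximum by definition of a minimal core, we conclude that
\begin{equation*}
\frac{|E_{H_0}|}{|V_{H_0}|-1} \;=\; \theta(H_0) \;=\; \theta(G) \;=\; \frac{|E|}{|V|-1}.
\end{equation*}

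Second, I will extract arithmetic information by cross-multiplying: $|E_{H_0}|(|V|-1) = |E|(|V_{H_0}|-1)$. Since $\gcd(|E|,|V|-1)=1$, Euclid's lemma forces $|E|$ to divide $|E_{H_0}|$ and $|V|-1$ to divide $|V_{H_0}|-1$. Combined with the obvious inequalities $|E_{H_0}|\le|E|$ and $|V_{H_0}|-1\le|V|-1$ (the latter because $V_{H_0}\subseteq V$), both divisibility relations must be equalities: $|E_{H_0}|=|E|$ and $|V_{H_0}|=|V|$.

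Third, I will conclude that $H_0=G$, which contradicts $H_0\subsetneq G$ and finishes the proof. Because $H_0$ is edge-induced (Definition~\ref{def:subgraphs}), the equality $|E_{H_0}|=|E|$ gives $E_{H_0}=E$, and then the vertex set of $H_0$ is forced to be exactly the set of endpoints of edges in $E$; biconnectedness of $G$ (with $|V|\ge 2$) ensures $G$ has no isolated vertices, so this set is all of $V$.

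There is essentially no obstacle here beyond bookkeeping: the only substantive input is Theorem~\ref{thm:mincore}, which guarantees a proper strictly-dense core whenever $G$ itself fails to be strictly dense, and the rest is elementary divisibility. The one point to be careful about is that the maximum-density property of the minimal core is used, not merely the strict-denseness of $H_0$ in isolation; this is where homogeneity of $G$ is essential to match $\theta(H_0)$ with $\theta(G)$.
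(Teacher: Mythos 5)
Your proof is correct and follows essentially the same route as the paper: contradiction via Theorem~\ref{thm:mincore}(ii) to obtain a proper minimal core $H_0$ with $\theta(H_0)=\theta(G)$ (using homogeneity $=$ denseness to pin the maximum density at $\theta(G)$), and then the observation that the equality $|E_{H_0}|/(|V_{H_0}|-1)=|E|/(|V|-1)$ with a strictly smaller numerator contradicts coprimality. You simply spell out the divisibility bookkeeping that the paper compresses into the phrase ``the fraction can be non-trivially simplified.''
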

\begin{proof}[Proof of Corollary \ref{cor:relprime}]
If not, by Theorem \ref{thm:mincore} (ii), $G$ admits  a proper subgraph $H_0$ with the same density. Hence, the fraction $|E(G)|/(|V(G)|-1)$ can be non-trivially simplified to $|E(H_0)|/(|V(H_0)|-1)$.
\end{proof}
\begin{proof}[Proof of Theorem \ref{thm:mincore}]
Assume $G$ is not strictly dense. Let $H_0\in\cH(G)$ solve the strict denseness problem for $G$. Then,  $H_0$ is a proper subgraph of $G$ and $\theta(H_0)\ge \theta(G)$. Note that, since $H_0$ has the strict density property, by Lemma \ref{lem:dense-is-homogeneous}, $H_0$ is also homogeneous.

Note that $H_0$ is necessarily vertex-induced, because adding edges without changing the vertices can only increase the density $\theta(H)$. It follows from \cite[Theorem 5.9]{achpcst:disc-math2021} that $H_0$ is a homogeneous core of $G$, as in \cite[Definition 5.3]{achpcst:disc-math2021}. 
In particular, $H_0$ must have the restriction property \cite[Definition 5.1]{achpcst:disc-math2021}. 

Finally parts (b) and (c) follow from part (a), because Theorem 5.5, Lemma 5.6 and Theorem 5.7, in \cite{achpcst:disc-math2021} only depend on the fact that $H_0$ has the restriction property with respect to $G$.
\end{proof}

In particular, using \cite[Theorem 5.7]{achpcst:disc-math2021}, we can formulate a serial rule for the Minimum Expected Overlap problem ($\MEO$), for the definition see (1.2) in \cite{achpcst:disc-math2021}. The advantage of performing a deflation process that repeatedly identifies a minimal core and shrinks it, is that minimal cores are strictly dense, and therefore, by Theorem \ref{thm:main-dense}, they have an optimal $\UST_\si$ pmf. In other words,
if $G$ is an arbitrary multigraph, then it always admits an optimal pmf for the $\MEO$ problem, that can be constructed from a sequence of $\UST_\si$ pmfs, and the advantage of an $\UST_\si$ pmf $\mu_\si$ is that there are well-known algorithms, like Aldous-Broder or Wilson's algorithm, for sampling them.

\bibliographystyle{acm}
\bibliography{pmodulus}
\def\cprime{$'$}

\end{document}